 \renewcommand{\equation}
\newtheorem{prop}{Proposition}[section]
\newtheorem{lem}{Lemma}[section]
\newtheorem{fact}{Fact}[section]
\newtheorem{corresp*}{Correspondence}
\newtheorem{facts*}{Facts}
\newtheorem{claim}{Claim}[section]
\theoremstyle{definition}
\newtheorem{rmk}{Remark}[section]
\newtheorem{defini}{Definition}[section]
\newtheorem{example}{Example}[section]
\title{An example of semiquandle and u-polynomials for flat virtual knots via this semiquandle coloring}
\author{Nozomu Sekino}
\date{}
\begin{document}
\maketitle

\begin{abstract}
Flat virtual links are some variant of links, and semiquandles are counterparts of quandles or biquandles, which axiomize the Reidemeister-like moves. 
In this paper, we give some example of semiquandle and introduce an invariant for flat virtual knots using it. 
We also explain that it relates to the u-polynomials for flat virtual knots. 
\end{abstract}

\section{Introduction}
Knots or links can be handled combinatorially by being regarded as equivalence classes generated by Reidemeister moves of diagrams. 
There are many variants of knots and links and corresponding Reidemeister-like moves. 
In this paper, we focus on one of them, called {\it flat virtual knots}. 
Flat virtual knots are introduced and studied in \cite{kauffman} and \cite{upolynomial}. In \cite{upolynomial}, flat virtual knots are called {\it virtual strings}. 
Flat virtual links can be regarded as the resultants of discarding the information of over/under crossings of virtual links. The crossings removed the information are called {\it flat crossings}. 
Following \cite{kauffman} and \cite{upolynomial}, there are many works studying flat virtual knots or links, \cite{chen1}, \cite{chen2}, \cite{gibson1}, \cite{gibson2}, \cite{henrich}, \cite{hrencecin}, \cite{im}, \cite{kauffman2}, \cite{lee}, and so on. 
\cite{flatknotinfo} gives many examples of flat virtual knots and computations of invariants. 
In this paper, we investigate flat virtual knots in terms of {\it semiquandle colorings}. 
In \cite{henrich}, the {semiquandle} structures on sets are defined. 
They are axiomization of Reidemeister-like moves of flat virtual links version, like quandle or biquandle structures for classical or virtual links. 
We will give an example of semiquandle and define some invariant for flat virtual knots in terms of coloring equations of this semiquandle. 
We also check that we can get the {u-polynomial}, which is a classical invariant of flat virtual knots from this invariant. 

The rest of this paper is constructed as follows: 
In Section 2, we define a not necessarily commutative ring $\mathcal{S}_{n}$ for non-negative integer $n$. 
In Section 3, we recall the definition of semiquandles and give $\mathcal{S}_n$ a semiquandle structure. 
In Section 4, we recall the definition of flat virtual links and the useful presentations of flat virtual knots, called {\it Gauss diagrams}. Also, we recall the definition of u-polynomials in terms of Gauss diagrams. 
In Section 5, we recall the definition of a tool we needed for defining our invariant, called {\it quasideterminant}. 
This is defined on matrices with entries of not necessarily commutative ring. 
In Section 6, we construct our invariant and prove the invariance of the choices in the construction, and we explain the relation to the u-polynomials.

\section{The ring $\mathcal{S}_{n}$}
Set $R$ to be $\mathbb{Z}\left< s \right>\oplus \mathbb{Z}\left< \overline{s} \right> \oplus \mathbb{Z} \left<t\right>$, a free $\mathbb{Z}$-module with a basis $\{s, \overline{s}, t\}$, and $T(R)$ to be the tensor algebra over $R$, where tensor operations are considered with $\mathbb{Z}$-coefficients. 
Let $n$ be a non-negative integer. 
Let $I_{n}$ be a two-sided ideal of $T(R)$ generated by elements: 
\begin{itemize}
\item $s \otimes \overline{s} -1$, 
\item $\overline{s} \otimes s -1$, 
\item $t\otimes s-s\otimes \left( {\displaystyle \sum^{n}_{i=1} } t^{\otimes i}\right)$, 
\item $t\otimes \overline{s}-\overline{s}\otimes \left( {\displaystyle \sum^{n}_{i=1} }(-1)^{i-1} t^{\otimes i}\right)$, and 
\item $t^{\otimes n+1}$.
\end{itemize}
Then define ${\mathcal{S}}_{n}$ as a ring $T(R)/ I_n$. 
Abusing notation, the images of $s, \overline{s}, t\in T(R)$ are denoted by $s, s^{-1}, t\in {\mathcal{S}}_{n}$, respectively. 
Informally, ${\mathcal{S}}_{n}$ is a ring of polynomials of $\mathbb{Z}$-coefficients over two non-commutative variables, one of which $s$ is invertible and the other $t$ is not, and satisfying $ts=s\left( {\displaystyle \sum^{n}_{i=1} } t^{ i}\right)$, $ts^{-1}=s^{-1}\left( {\displaystyle \sum^{n}_{i=1} }(-1)^{i-1} t^{i}\right)$ and $t^{n+1}=0$. 
We frequently regard ${\mathcal{S}}_{n}$ in this way and refer ``monomials'' for example. 
We will research properties of $\mathcal{S}_n$ in next subsections.

\begin{rmk}\label{degree}
For a monomial of ${\mathcal{S}}_n$, we call the number of the total exponents of $t$ in this monomial the {\it degree} of it. 
By the relations of ${\mathcal{S}}_n$, a monomial is transformed into some sum of monomials. 
Note that the degree of each monomial in this sum is not less than that of the original monomial. 
Especially, we see that a monomial of degree grater than $n$ must be $0$ in ${\mathcal{S}}_{n}$. 
\end{rmk} 

\subsection{A basis of ${\mathcal{S}}_{n}$ as a $\mathbb{Z}$-module}
By forgetting the product structure, $\mathcal{S}_{n}$ can be regarded as a $\mathbb{Z}$-module. 
\begin{prop} \label{basis}
$\mathcal{S}_{n}$ has a basis $\{ s^{l}t^{m} \mid l, m\in \mathbb{Z}, \ 0\leq m \leq n\}$ as a $\mathbb{Z}$-module.
\end{prop}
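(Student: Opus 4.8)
The plan is to prove the two halves of the basis claim separately: first that $B=\{s^{l}t^{m}\mid l\in\mathbb{Z},\ 0\le m\le n\}$ spans $\mathcal{S}_{n}$ over $\mathbb{Z}$, and then that $B$ is $\mathbb{Z}$-linearly independent. Spanning is the routine half and independence is where the real work lies, so I would set up an explicit linear representation of $\mathcal{S}_{n}$ on a free $\mathbb{Z}$-module that is rich enough to force independence.

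For spanning, let $M$ denote the $\mathbb{Z}$-span of $B$. Since $\mathcal{S}_{n}$ is generated as a ring by $s$, $s^{-1}$, $t$ and $1=s^{0}t^{0}\in M$, it suffices to show that $M$ is closed under left multiplication by each of $s,s^{-1},t$; every element of $\mathcal{S}_{n}$ is then a $\mathbb{Z}$-combination of words applied to $1$. Left multiplication by $s^{\pm1}$ merely shifts the exponent $l$, so the only point is $t\cdot s^{l}t^{m}$. Writing $f(t)=\sum_{i=1}^{n}t^{i}$ and $g(t)=\sum_{i=1}^{n}(-1)^{i-1}t^{i}$, the defining relations give $ts=sf(t)$ and $ts^{-1}=s^{-1}g(t)$, and by an easy induction (first on the exponent of $t$, giving $t^{k}s=sf(t)^{k}$, then on $|l|$) one gets $t\,s^{l}=s^{l}c_{l}(t)$ for a polynomial $c_{l}$ in $t$, with $c_{l+1}(t)=c_{l}(f(t))$ for $l\ge0$ and the analogous recursion with $g$ for $l<0$. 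Hence $t\cdot s^{l}t^{m}=s^{l}\,c_{l}(t)\,t^{m}$ is a $\mathbb{Z}$-combination of elements $s^{l}t^{m'}$, and using $t^{n+1}=0$ (Remark \ref{degree}) every exponent exceeding $n$ is discarded, so the result lies in $M$. Remark \ref{degree} is exactly what guarantees the rewriting terminates with exponents in the range $0\le m\le n$.

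For independence I would build an $\mathcal{S}_{n}$-module on which the elements of $B$ act as distinct basis vectors. Let $A=\mathbb{Z}[t]/(t^{n+1})$, a free $\mathbb{Z}$-module with basis $1,t,\dots,t^{n}$, and set $F=\bigoplus_{l\in\mathbb{Z}}A$, writing $[l,a]$ for $a\in A$ in the $l$-th summand, so that $\{[l,t^{m}]\}$ is a $\mathbb{Z}$-basis of $F$. Substitution $t\mapsto f(t)$ defines a ring endomorphism $\phi$ of $A$ (it preserves $(t^{n+1})$ since $f(t)\in(t)$), and $t\mapsto g(t)$ defines $\psi$; the power series $t/(1-t)$ and $t/(1+t)$ truncating to $f$ and $g$ and being mutually inverse under composition shows $\phi\psi=\psi\phi=\mathrm{id}$, so $\phi$ is an automorphism with $\phi^{-1}=\psi$. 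I then define $\mathbb{Z}$-endomorphisms of $F$ by $S[l,a]=[l+1,a]$, $\overline{S}[l,a]=[l-1,a]$, and $T[l,a]=[l,\phi^{l}(t)\cdot a]$, and check that the five generators of $I_{n}$ act as zero, so the assignment extends to an algebra map $\rho\colon\mathcal{S}_{n}\to\mathrm{End}_{\mathbb{Z}}(F)$.

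The hard part is verifying the two skew relations at the module level, and this is where the construction is engineered. The relations $S\overline{S}=\overline{S}S=\mathrm{id}$ are immediate, and $T^{n+1}=0$ holds because $\phi^{l}(t)\in(t)$ is nilpotent of order at most $n+1$ in $A$. For $ts=sf(t)$ one must check $TS=S\,f(T)$; evaluating both sides on $[l,a]$ reduces this to the polynomial identity $\phi^{l+1}(t)=\sum_{i=1}^{n}(\phi^{l}(t))^{i}$, which I would obtain from $\phi^{l+1}(t)=\phi^{l}(f(t))=\phi^{l}\!\big(\sum_{i=1}^{n}t^{i}\big)=\sum_{i=1}^{n}(\phi^{l}(t))^{i}$, the middle step using that $\phi^{l}$ is a ring homomorphism; the relation $t s^{-1}=s^{-1}g(t)$ follows identically with $\psi=\phi^{-1}$. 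Once $\rho$ is defined, $\rho(s^{l}t^{m})[0,1]=[l,t^{m}]$, so any $\mathbb{Z}$-relation among the $s^{l}t^{m}$ pushes forward to a relation among the basis vectors $[l,t^{m}]$ of $F$ and hence is trivial. Combined with spanning, this proves $B$ is a basis. Conceptually $F$ is just the skew Laurent ring $A[s^{\pm1};\phi^{-1}]$ acting on itself, which is the reason to expect freeness over $A$ with basis the powers of $s$; the explicit module merely makes this self-contained.
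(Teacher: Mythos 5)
Your proof is correct, and it takes a genuinely different route from the paper's. The paper proves independence in PBW/normal-form style: it defines a reduction $q$ (cancelling $s\overline{s}$ and $\overline{s}s$) and then an inductively defined map $L$ into $\mathbb{Z}[X^{\pm1},Y]/(Y^{n+1})$ that involves a choice of rewriting position $l$, and the two hard steps --- that $L$ is independent of these choices, and that the resulting linear map $L'$ kills the ideal $I_n$ --- are explicitly omitted from the sketch. You instead build a representation $\rho\colon \mathcal{S}_n \to \mathrm{End}_{\mathbb{Z}}(F)$ on $F=\bigoplus_{l\in\mathbb{Z}}\mathbb{Z}[t]/(t^{n+1})$ and evaluate at the cyclic vector $[0,1]$. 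The structural advantage is that well-definedness comes for free: an algebra map out of the tensor algebra $T(R)$ exists for any assignment of the generators, so the only verification needed is that the five generators of $I_n$ act as zero, and each of those checks collapses to the observation that $\phi^{l}$ is a ring homomorphism (giving $\phi^{l+1}(t)=\phi^{l}(f(t))=f(\phi^{l}(t))$, and similarly with $g$) plus the fact that the truncations of $t/(1-t)$ and $t/(1+t)$ are mutually inverse under composition. In other words, your construction sidesteps exactly the confluence-type argument that the paper's sketch leaves unproved, and as written your argument is actually the more complete of the two; it also identifies $\mathcal{S}_n$ conceptually as the skew Laurent ring $\left(\mathbb{Z}[t]/(t^{n+1})\right)[s^{\pm1};\phi^{-1}]$ with $F$ its regular representation, which explains \emph{why} freeness holds. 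What the paper's approach would buy, if completed, is an explicit evaluation map into the commutative ring $\mathbb{Z}[X^{\pm1},Y]/(Y^{n+1})$, but your map $x\mapsto\rho(x)[0,1]$, sending $s^{l}t^{m}\mapsto[l,t^{m}]$, plays the identical role, so nothing essential is lost. Your spanning argument (closure of the span under left multiplication via $ts^{l}=s^{l}c_{l}(t)$ and $t^{n+1}=0$) matches the paper's one-line claim but with the induction actually spelled out.
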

\begin{proof}
(sketch)\\
By the relations $ts=s\left( {\displaystyle \sum^{n}_{i=1} } t^{ i}\right)$, $ts^{-1}=s^{-1}\left( {\displaystyle \sum^{n}_{i=1} }(-1)^{i-1} t^{i}\right)$ and $t^{n+1}=0$, it is clear that every element of $\mathcal{S}_{n}$ is represented as a $\mathbb{Z}$-linear combination of $\{ s^{l}t^{m} \mid l, m\in \mathbb{Z}, \ 0\leq m \leq n\}$. 
We will give a sketch of a proof of the linear independence of $\{ s^{l}t^{m} \mid l, m\in \mathbb{Z}, \ 0\leq m \leq n\}$. 
We follow a ${\rm Poincar\acute{e} }$-Birkhoff-Witt like argument. 
For a monomial $f$ of $T(R)$, regarded as a polynomial ring over three non-commutative variables $s$, $\overline{s}$ and $t$, we assign some sequence of letters (or $0$) denoted by $q(f)$ as follows: 
If $f$ has a part of a form $s\overline{s}$ or $\overline{s}s$, replace this part with $1$, the empty word. 
Repeat this operation whenever there is a part of a form $s\overline{s}$ or $\overline{s}s$. 
The resultant is independent of the order of these replacements. 
Moreover, replace the valuable $\overline{s}$ with $s^{-1}$ as a notation. 
Set $q(f)$ to be this resultant. 
For a monomial $f$ of $T(R)$, this $q(f)$ is represented uniquely by the form of $s^{a_{k}}t^{b_{k}}\cdots s^{a_{1}}t^{b_1}$, where $|a_{k}|, b_{1}\geq 0$ and the other $|a_{i}|, b_i >0$ and the case $k=0$ is regarded as $q(f)=1$. 
We assign an element $L(s^{a_{k}}t^{b_{k}}\cdots s^{a_{1}}t^{b_1})$ of $\mathbb{Z}[X^{\pm1}, Y]/(Y^{n+1})$ to $s^{a_{k}}t^{b_{k}}\cdots s^{a_{1}}t^{b_1}$ by an induction on $\left( k, (|a_{1}|, \dots, |a_{k}|)\right)$ in lexicographical order as follows. 

\begin{itemize}
\item Set $L(1)$ to be $1\in \mathbb{Z}[X^{\pm1}, Y]/(Y^{n+1})$, 
\item set $L(s^{a_{1}}t^{b_{1}})$ to be $X^{a_{1}}Y^{b_1}\in \mathbb{Z}[X^{\pm1}, Y]/(Y^{n+1})$ and
\item for $s^{a_{k}}t^{b_{k}}\cdots s^{a_{1}}t^{b_1}$ with $k\geq 1$, ``choose'' some $l<k$ and set $L(s^{a_{k}}t^{b_{k}}\cdots s^{a_{l+1}}t^{b_{l+1}}s^{a_l}t^{b_{l}}\cdots s^{a_{1}}t^{b_1}) $to be the following. Note that the right hand sides are already defined by the inductive steps.  
\begin{displaymath}
\left \{ 
\begin{array}{l} {\displaystyle \sum^{n}_{i_{1}=1}} \cdots {\displaystyle \sum^{n}_{i_{b_{l+1}=1}}} L(s^{a_{k}}t^{b_{k}}\cdots s^{a_{l+1}+1}t^{(i_{1}+\cdots +i_{b_{l+1}})}s^{a_{l}-1}t^{b_{l}}\cdots s^{a_1}t^{b_1})\ \  \ \ (a_{l}>0) \\ 
(-1)^{b_{l+1}} {\displaystyle \sum^{n}_{i_{1}=1}} \cdots {\displaystyle \sum^{n}_{i_{b_{l+1}=1}}}(-1)^{(i_{1}+\cdots +i_{b_{l+1}})} L(s^{a_{k}}t^{b_{k}}\cdots s^{a_{l+1}-1}t^{(i_{1}+\cdots +i_{b_{l+1}})}s^{a_{l}+1}t^{b_{l}}\cdots s^{a_1}t^{b_1})\ \  \ \ (a_{l}<0) 
\end{array} \right. 
\end{displaymath} 

\end{itemize}

We should show that the above is independent of the choice of $l$, which we omit. 
Through proving this independence, we must prove that $L(s^{a_{k}}t^{b_{k}}\cdots s^{a_{1}}t^{b_1}) =0$ if ${\displaystyle \sum^{k}_{i=1}t^{b_{i}}}>n$ by the same induction. 
Thus we have an element $L\circ q (f) \in \mathbb{Z}[X^{\pm1},Y]/(Y^{n+1})$ for a monomial $f$ of $T(R)$. 
Extending this linearly, we get a $\mathbb{Z}$-linear map $L': T(R)\longrightarrow \mathbb{Z}[X^{\pm1}, Y]/(Y^{n+1})$. 
It should be checked that every element of the two-sided ideal $I_n$ is mapped to $0\in \mathbb{Z}[X^{\pm1},Y]/(Y^{n+1})$ by $L'$, which we omit again. 
Then $L'$ induces a $\mathbb{Z}$-linear map from $\mathcal{S}_n$ to $\mathbb{Z}[X^{\pm1},Y]/(Y^{n+1})$. 
Under this map, the set $\{ s^{l}t^{m} \mid l, m\in \mathbb{Z}, \ 0\leq m \leq n\}$ is mapped to the set $\{ X^{l}Y^{m} \mid l, m\in \mathbb{Z}, \ 0\leq m \leq n\}$, which consists of linearly independent elements of $\mathbb{Z}[X^{\pm1},Y]/(Y^{n+1})$. 
This implies the linear independence of $\{ s^{l}t^{m} \mid l, m\in \mathbb{Z}, \ 0\leq m \leq n\}$. 
\end{proof}

\subsection{Unit elements of $\mathcal{S}_{n}$}
We will characterize unit elements of ${\mathcal{S}}_{n}$. 
Recall that an element $x$ of a not necessarily commutative ring is a unit element if there exists an element $y$, called the inverse of $x$, satisfying $xy=yx=1$. 
Such $y$ is unique for $x$ and is denoted by $x^{-1}$. 

\begin{defini}
Let $n$ and $m$ be non-negative integers with $n\leq m$. 
Since two-sided ideals $I_{n}$ and $I_{m}$ of $T(R)$ satisfies $I_{m}\subset I_{n}$, there exists a ring homomorphism from $\mathcal{S}_{m}$ to $\mathcal{S}_{n}$ induced by the identity map of $T(R)$. 
This map is denoted by $\pi_{n,m}: \mathcal{S}_{m}\longrightarrow \mathcal{S}_{n}$, and called projection. 
Under the bases of Proposition~\ref{basis}, the image of an element of $\mathcal{S}_{m}$ under $\pi_{n,m}$ is obtained by discarding the terms whose powers of $t$ is grater than $n$. 
It can be seen that for non-negative integers $n$, $m$ and $l$ with $n\leq m\leq l$, the equation $\pi_{n,l}=\pi_{n,m}\circ \pi_{m,l}$ holds. 
\end{defini}

\begin{rmk}\label{submodule}
For non-negative integers $n$ and $m$ with $n\leq m$, the basis of Proposition~\ref{basis} for $\mathcal{S}_{n}$ is a subset of that for $\mathcal{S}_{m}$. 
Thus $\mathcal{S}_{n}$ can be regarded as a submodule of $\mathcal{S}_{m}$. 
However, $\mathcal{S}_{n}$ is not a subring of $\mathcal{S}_{m}$. 
In fact, the product of $t$ and $t^{n}$ in $\mathcal{S}_{n}$ is $0$ but that in $\mathcal{S}_{n+1}$ is not $0$. 
\end{rmk}

\begin{lem}\label{unit}
The element $x\in \mathcal{S}_n$ is a unit element if and only if $\pi_{0,n}(x)$ is a unit element of $\mathcal{S}_{0}=\mathbb{Z}[s^{\pm1}]$. 
\end{lem}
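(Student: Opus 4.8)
The plan is to realize $\ker \pi_{0,n}$ as a nilpotent two-sided ideal and then invoke the standard fact that units lift across a quotient by a nilpotent ideal. Since $\pi_{0,n}$ is a surjective ring homomorphism, its kernel $J$ is a two-sided ideal with $\mathcal{S}_n/J \cong \mathcal{S}_0 = \mathbb{Z}[s^{\pm1}]$. By the basis description of $\pi_{0,n}$ coming from Proposition~\ref{basis}, $J$ is exactly the $\mathbb{Z}$-span of $\{s^l t^m \mid l\in\mathbb{Z},\ 1\leq m\leq n\}$, that is, the elements all of whose monomials have degree at least $1$.

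First I would show that $J$ is nilpotent, more precisely that $J^{n+1}=0$. Every element of $J$ is a $\mathbb{Z}$-linear combination of monomials of degree $\geq 1$, so by distributivity a product of $n+1$ elements of $J$ is a sum of products of $n+1$ monomials, each of degree $\geq 1$. By Remark~\ref{degree}, rewriting such a product in the basis of Proposition~\ref{basis} produces only monomials of degree $\geq n+1$, all of which vanish in $\mathcal{S}_n$. Hence $J^{n+1}=0$.

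The ``only if'' direction is then immediate, since a ring homomorphism sends units to units. For the converse, suppose $\pi_{0,n}(x)$ is a unit of $\mathbb{Z}[s^{\pm1}]$. Lifting an inverse along the surjection $\pi_{0,n}$, I would obtain $y\in\mathcal{S}_n$ with $xy = 1-a$ and $yx = 1-b$ for some $a,b\in J$. Because $a^{n+1}\in J^{n+1}=0$ and likewise for $b$, the elements $1-a$ and $1-b$ are units with explicit inverses $\sum_{i=0}^{n}a^i$ and $\sum_{i=0}^{n}b^i$; here one uses only that powers of a single element commute, so the finite geometric-series identity is valid despite $\mathcal{S}_n$ being noncommutative. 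Thus $y(1-a)^{-1}$ is a right inverse of $x$ and $(1-b)^{-1}y$ is a left inverse, and the one-line computation $l = l(xr) = (lx)r = r$ shows these coincide, so $x$ is a unit.

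The main obstacle, and really the only nonroutine point, is the nilpotency of $J$; for this all the work has already been carried out in Remark~\ref{degree}, since the fact that rewriting never lowers the $t$-degree is exactly what forces $J^{n+1}=0$. The remaining lifting argument is the usual one for nilpotent ideals, and the only care needed is to keep track of left versus right inverses because the ring is noncommutative.
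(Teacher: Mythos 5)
Your proof is correct, and it takes a genuinely different route from the paper's. You identify $J=\ker\pi_{0,n}$ as the span of the basis monomials $s^l t^m$ with $m\geq 1$, show $J^{n+1}=0$ using the fact from Remark~\ref{degree} that rewriting never lowers $t$-degree, and then run the standard unit-lifting argument for a quotient by a nilpotent ideal, taking care to reconcile the left and right inverses via $l=l(xr)=(lx)r=r$. The paper instead proceeds by induction on $n$ through the tower of projections $\pi_{k,k+1}$: it lifts the inverse one $t$-degree at a time, writing $x=x'+ft^{k+1}$ and explicitly correcting the candidate inverse $y'$ by a term in $t^{k+1}$, and it resolves the left-versus-right issue with the identity $x(yx-1)=(xy)x-x=0$. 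Your approach is shorter and more conceptual — it isolates the single structural fact (nilpotency of the augmentation-type ideal) that makes the lemma true, and it would generalize verbatim to any surjection with nilpotent kernel — whereas the paper's induction yields a concrete recursive formula for the inverse and stays entirely within the explicit basis computations it has already set up. Both arguments rest on the same underlying input, namely Proposition~\ref{basis} together with the degree monotonicity of Remark~\ref{degree}, so your proof is a legitimate alternative with no gaps.
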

\begin{proof}
Only if part is clear by applying $\pi_{0,n}$ to an equation $xy=1$ for $x,y\in \mathcal{S}_{n}$. 
We prove if part. 
We prove by induction. The case $n=0$ is clear. 
Suppose that the statement is true for some $k\geq 0$. 
Let $x\in \mathcal{S}_{k+1}$ be an element such that $\pi_{0,k+1}(x)$ is a unit element of $\mathcal{S}_{0}=\mathbb{Z}[s^{\pm1}]$. 
Set $x'$ to be $\pi_{k,k+1}(x)$. 
Then $x'$ is a unit element of $\mathcal{S}_{n}$ by the induction step since $\pi_{0,k}(x')=\pi_{0,k+1}(x)$ is a unit element. 
Take the inverse $y'\in \mathcal{S}_{k}$ of $x'$. 
As in Remark~\ref{submodule}, these $x', y'\in \mathcal{S}_{k}$ can be regarded as elements of $\mathcal{S}_{k+1}$. 
In $\mathcal{S}_{k+1}$, the element $x$ and the products $x'y'$ and $y'x'$ can be represented as $x'+ft^{k+1}$, $1+gt^{k+1}$ and $1+ht^{k+1}$ for some $f, g, h\in \mathbb{Z}[s^{\pm1}]$ respectively. 
Then $xy=1$ holds in $\mathcal{S}_{k+1}$ by setting $y$ to be $y'-\left(\pi_{0,k+1}(x)\right)^{-1}\left(\pi_{0,k}(y')\cdot f+g\right)t^{k+1}$. This computation is guaranteed by Remark~\ref{degree}. 
It remains to show that $y x=1$ holds. 
By computation, $yx=\left( y'-\left(\pi_{0,k+1}(x)\right)^{-1}\left(\pi_{0,k}(y')\cdot f+g\right)t^{k+1} \right)\cdot (x'+ft^{k+1})=1+(h-g)t^{k+1}$ holds in $\mathcal{S}_{k+1}$. 
Thus we have $x(yx-1)=x(h-g)t^{k+1}=\pi_{0,k+1}(x)(h-g)t^{k+1}$. 
On the other hand, $x(yx-1)=(xy)x-x=0$. 
This implies $h=g$ and $yx=1$ in $\mathcal{S}_{k+1}$. 
\end{proof}

\section{Semiquandle structure on $\mathcal{S}_{n}$}
In \cite{semiquandle}, the algebraic structure {\it semiquandle} is defined. 
It is variant of quandle or biquandle. 
As with quandle and biquandle, this semiquandle structure corresponds to the axiomatization of the transformations preserving the isotopy type of some variant of knots, {\it flat virtual knots}. See Section~\ref{sec_flatvirtuallinks}. 
In this section, we recall the definition of semiquandles and give our $\mathcal{S}_{n}$ the structure of semiquandle. 

\begin{defini}\label{def_semiquandle}
(Definition 1 of \cite{semiquandle})\\
A {\it Semiquandle} is a set $X$ with two binary operations $(x,y)\longmapsto x\triangleleft_{o}y$ and $(x,y)\longmapsto x\triangleleft_{u}y$ such that, for all $x,\ y,\ z\in X$;
\begin{itemize}
\item[(0)] there are unique $v$ and $w\in X$ with $x=v\triangleleft_{o}y$ and $x=w\triangleleft_{u}y$,
\item[(1)] $x\triangleleft_{u}y=y$ if and only if $y\triangleleft_{o}x=x$,
\item[(2)] $(x\triangleleft_{u}y)\triangleleft_{o}(y\triangleleft_{o}x)=x$ and $(x\triangleleft_{o}y)\triangleleft_{u}(y\triangleleft_{u}x)=x$, and
\item[(3)] $\left( x\triangleleft_{o}y \right)\triangleleft_{o}z=\left( x\triangleleft_{o}(z\triangleleft_{u}y) \right)\triangleleft_{o}(y\triangleleft_{o}z)$, \ $\left( y\triangleleft_{u}x \right)\triangleleft_{o} \left(z\triangleleft_{u}(x\triangleleft_{o}y) \right)=\left( y\triangleleft_{o}z \right)\triangleleft_{u} \left(x\triangleleft_{o}(z\triangleleft_{u}y) \right)$, and 
$\left( z\triangleleft_{u}(x\triangleleft_{o}y) \right)\triangleleft_{u}(y\triangleleft_{u}x)=\left( z\triangleleft_{u}y \right)\triangleleft_{u}x$.
\end{itemize}
\end{defini}

\begin{prop}
$\mathcal{S}_{n}$ becomes a semiquandle by defining $x\triangleleft_{o}y=s\cdot x+t\cdot y$ and $x\triangleleft_{u}y=s^{-1}(1-t^{2})\cdot x-s^{-1}ts\cdot y$ for all $x,y\in \mathcal{S}_{n}$.
\end{prop}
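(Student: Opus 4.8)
The plan is to exploit the fact that both operations are left $\mathcal{S}_n$-linear in each variable: writing $x\triangleleft_o y = s\cdot x + t\cdot y$ and $x\triangleleft_u y = \alpha\cdot x + \beta\cdot y$ with $\alpha = s^{-1}(1-t^2)$ and $\beta = -s^{-1}ts$, every expression occurring in the axioms is a left-linear combination of the free variables with coefficients in $\mathcal{S}_n$. Hence each axiom reduces, after expansion, to a comparison of the coefficients of the variables, that is, to a finite list of identities among elements of $\mathcal{S}_n$ which I would verify using the defining relations. Throughout I would abbreviate $u=\sum_{i=1}^n t^i$ and $\bar u=\sum_{i=1}^n(-1)^{i-1}t^i$, so that the relations read $ts=su$ (equivalently $s^{-1}ts=u$) and $ts^{-1}=s^{-1}\bar u$ (equivalently $sts^{-1}=\bar u$), together with $t^{n+1}=0$; I would also record the elementary consequences $tu=u-t$ and $\bar u t=t-\bar u$, valid because $t^{n+1}=0$.

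For axiom (0), solving $x=v\triangleleft_o y=sv+ty$ gives the unique $v=s^{-1}(x-ty)$ since $s$ is a unit, and solving $x=w\triangleleft_u y=\alpha w+\beta y$ requires only that $\alpha=s^{-1}(1-t^2)$ be a unit; this follows from Lemma~\ref{unit}, since $\pi_{0,n}(\alpha)=s^{-1}$ is a unit of $\mathbb{Z}[s^{\pm1}]$, whence $w=\alpha^{-1}(x-\beta y)$ is unique. For axiom (1) I would solve each side separately: $y\triangleleft_o x=x$ is equivalent to $y=s^{-1}(1-t)x$, and I would substitute this into $x\triangleleft_u y=y$ and check it holds, using $s^{-1}ts\cdot s^{-1}(1-t)x=s^{-1}t(1-t)x$ and $(1+t)(1-t)=1-t^2$; since $x\triangleleft_u y=y$ likewise determines $y$ uniquely from $x$, the two conditions coincide. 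Axiom (2) is the most mechanical yet the cleanest: expanding $(x\triangleleft_u y)\triangleleft_o(y\triangleleft_o x)$ and $(x\triangleleft_o y)\triangleleft_u(y\triangleleft_u x)$ and collecting terms, the coefficients of $y$ cancel in pairs and those of $x$ collapse to $1$ using only $ss^{-1}=s^{-1}s=1$; notably, words such as $s^{-1}t^2s$ cancel against one another, so no commutation relation is needed here at all.

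The hard part will be axiom (3), whose three equations I would treat one at a time. Each is linear in $x$, $y$, $z$, so after fully expanding both sides I would equate the coefficients of $x$, $y$, $z$ separately, obtaining a short list of identities in $\mathcal{S}_n$. For the first equation, the comparison reduces to $ts-st=sts^{-1}ts$ and $t-t^2=sts^{-1}(1-t^2)$; the former follows from $sts^{-1}ts=s(tu)=s(u-t)=su-st=ts-st$, and the latter from $sts^{-1}(1-t^2)=\bar u-\bar u t^2=t-t^2$, where in both cases the higher-degree terms telescope away precisely because $t^{n+1}=0$ (Remark~\ref{degree}). The remaining two equations of (3) produce analogous but longer identities involving words such as $s^{-1}ts^2$ and $ts^{-1}ts$; I would reduce each word to the basis of Proposition~\ref{basis} by repeatedly pushing $t$ across $s^{\pm1}$ via $ts=su$ and $ts^{-1}=s^{-1}\bar u$, and then match coefficients. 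I expect the main obstacle to be purely bookkeeping: correctly tracking the non-commutative reorderings and confirming that the same telescoping cancellations (again powered by $t^{n+1}=0$) force both sides to agree; no conceptual difficulty beyond the careful, repeated use of these three relations is anticipated.
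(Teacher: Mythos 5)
Your proposal takes essentially the same route as the paper's own proof: direct expansion of each axiom and verification of the resulting coefficient identities from the defining relations (the paper writes $ts=st(1-t)^{-1}$ and $ts^{-1}=s^{-1}t(1+t)^{-1}$ where you write $ts=su$, $ts^{-1}=s^{-1}\overline{u}$ with the telescoping identities $tu=u-t$, $\overline{u}t=t-\overline{u}$), and everything you actually carry out is correct — axioms (0), (1), (2), and the reduction of the first equation of (3) to $ts-st=sts^{-1}ts$ and $t-t^2=sts^{-1}(1-t^2)$ all match the paper's computations. The only caveat is that you defer the second and third equations of axiom (3), which constitute the bulk of the paper's proof, to ``bookkeeping''; since the paper's explicit computations confirm that both sides of each reduce to the same expression under exactly the reductions you describe, nothing in your plan would fail, but as written it is a plan rather than a completed verification.
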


\begin{proof}This follows by the following computations. 
Note that $ts=s{\displaystyle \sum^{n}_{i=1} t^{i}}=s{\displaystyle \sum^{n+1}_{i=1} t^{i}}=st{\displaystyle \sum^{n}_{i=0} t^{i}}=st(1-t)^{-1}$ and $ts^{-1}=s^{-1}t(1+t)^{-1}$ hold.
\begin{itemize}
\item[(0)] $(s^{-1}x-s^{-1}ty)\triangleleft_{o}y=x$ and $\left( (1-t^{2})^{-1}sx+(1-t^{2})^{-1}tsy \right)\triangleleft_{u}y=x$\\ 
We can see easily that $\cdot \triangleleft_{o}y$ and $\cdot \triangleleft_{u}y$ are injective. 

\item[(1)]
                \begin{align}
                        &x\triangleleft_{u}y=y \iff s^{-1}(1-t^{2})x=(1+s^{-1}ts)y  \iff (1+t)(1-t)x=(1+t)sy \iff (1-t)x=sy    \notag  \\
                        &y\triangleleft_{o}x=x \iff sy=(1-t)x \notag
                \end{align}

\item[(2)]
                \begin{align}
                        (x\triangleleft_{u}y)\triangleleft_{o}(y\triangleleft_{o}x)&=\left( s^{-1}(1-t^2)x-s^{-1}tsy \right) \triangleleft_{o}(sy+tx) =s\left( s^{-1}(1-t^2)x-s^{-1}tsy\right) +t(sy+tx) =x  \notag  \\
                        (x\triangleleft_{o}y)\triangleleft_{u}(y\triangleleft_{u}x)&= (sx+ty)\triangleleft_{u} \left( s^{-1}(1-t^2)y-s^{-1}tsx \right)  \notag \\
&=s^{-1}(1-t^2)(sx+ty)-s^{-1}ts\left( s^{-1}(1-t^2)y-s^{-1}tsx \right) =x \notag  
                \end{align}

\item[(3)] 
                \begin{align}
                        (x\triangleleft_{o}y)\triangleleft_{o}z&=s(sx+ty)+tz=s^{2}x+sty+tz  \notag  \\
                        \left( x\triangleleft_{o}(z\triangleleft_{u}y) \right)\triangleleft_{o}(y\triangleleft_{o}z)&= s\left( sx+t\left( s^{-1}(1-t^{2})z-s^{-1}tsy \right)  \right) +t(sy+tz) \notag \\
&=s^{2}x+\left( ts-sts^{-1}ts \right)y +\left( sts^{-1}(1-t^{2})+t^2 \right)z \notag  \\
&=s^{2}x+\left( st(1-t)^{-1}-st^{2}(1-t)^{-1} \right)y+\left( t(1+t)^{-1}(1-t^2)+t^{2} \right)z \notag \\
&=s^{2}x+sty+tz \notag 
                \end{align}

                \begin{align}
                        \left( y\triangleleft_{u}x \right)\triangleleft_{o} \left(z\triangleleft_{u}(x\triangleleft_{o}y) \right)&=s\left( s^{-1}(1-t^2)y-s^{-1}tsx \right) +t\left( s^{-1}(1-t^{2})z-s^{-1}ts\left( sx+ty \right)  \right)  \notag  \\
&=-(ts+ts^{-1}ts^2)x+\left( (1-t^2)-ts^{-1}tst \right)y+ts^{-1}(1-t^2)z \notag \\
&=- t\left( 1+t(1-t)^{-1}  \right)sx+\left( (1-t^2)-t^{2}(1-t)^{-1}t \right)y+s^{-1}t(1+t)^{-1}(1-t^2)z \notag \\
&=- t\left( 1+t{\displaystyle \sum^{n}_{i=0}t^{i}}  \right)sx+\left( (1-t^2)-t^{3}{\displaystyle \sum^{n}_{i=0}t^{i}} \right)y+s^{-1}t(1-t)z \notag \\
&=- \left({\displaystyle \sum^{n}_{i=1}t^{i}}\right)sx+\left( 1-{\displaystyle \sum^{n}_{i=2}t^{i}} \right)y+s^{-1}t(1-t)z \notag \\
                        \left( y\triangleleft_{o}z \right)\triangleleft_{u} \left(x\triangleleft_{o}(z\triangleleft_{u}y) \right) &=s^{-1}(1-t^{2})(sy+tz)-s^{-1}ts\left( sx+t\left(  s^{-1}(1-t^2)z-s^{-1}tsy \right)  \right) \notag \\
&=-s^{-1}ts^{2}x+\left( s^{-1}(1-t^2)s+s^{-1}tsts^{-1}ts  \right)y+ \left( s^{-1}(1-t^2)t-s^{-1}tsts^{-1}(1-t^2)   \right)z \notag \\
&=-s^{-1}ts^{2}x+s^{-1}\left( 1-t^2+tsts^{-1}t  \right)sy+ s^{-1}t\left( (1-t^2)-t(1+t)^{-1}(1-t^2)   \right)z \notag \\
&=-\left( {\displaystyle \sum^{n}_{i=1}t^{i}} \right)sx+s^{-1}\left( 1-t^2+t\left( {\displaystyle \sum^{n}_{i=1}(-1)^{i-1}t^i} \right)t  \right)sy+ s^{-1}t\left( (1-t^2)-t(1-t)   \right)z \notag \\
&=-\left( {\displaystyle \sum^{n}_{i=1}t^{i}} \right)sx+s^{-1}\left( 1-t+{\displaystyle \sum^{n}_{i=1}(-1)^{i-1}t^i}  \right)sy+ s^{-1}t\left( 1-t   \right)z \notag \\
&=-\left( {\displaystyle \sum^{n}_{i=1}t^{i}} \right)sx+s^{-1}\left( 1-t+sts^{-1} \right)sy+ s^{-1}t\left( 1-t   \right)z \notag \\
&=-\left( {\displaystyle \sum^{n}_{i=1}t^{i}} \right)sx+\left( 1-s^{-1}ts+t \right)y+ s^{-1}t\left( 1-t   \right)z \notag \\
&=- \left({\displaystyle \sum^{n}_{i=1}t^{i}}\right)sx+\left( 1-{\displaystyle \sum^{n}_{i=2}t^{i}} \right)y+s^{-1}t(1-t)z \notag
                \end{align}

                \begin{align}
                  \left( z\triangleleft_{u}(x\triangleleft_{o}y) \right)\triangleleft_{u}(y\triangleleft_{u}x)&=s^{-1}(1-t^{2})\left(  s^{-1}(1-t^2)z-s^{-1}ts( sx+ty ) \right)-s^{-1}ts \left( s^{-1}(1-t^2)y-s^{-1}tsx \right) \notag \\
&=-\left( s^{-1}(1-t^2)s^{-1}ts^2  -s^{-1}t^{2}s \right)x-\left( s^{-1}(1-t^2)s^{-1}tst+s^{-1}t(1-t^2)   \right)y \notag \\
& \hspace{7cm} +s^{-1}(1-t^2)s^{-1}(1-t^2)  z \notag \\
&=-s^{-1}\left( (1-t^2)s^{-1}ts  -t^{2} \right)sx-s^{-1}(1-t^2)\left( s^{-1}ts+1 \right)ty \notag \\
& \hspace{7cm} +s^{-1}(1-t^2)s^{-1}(1-t^2) z \notag \\
&=-s^{-1}\left( (1-t^2)t(1-t)^{-1}  -t^{2} \right)sx-s^{-1}(1-t^2)\left( \left({\displaystyle \sum^{n}_{i=1}t^{i}}\right)+1 \right)ty \notag \\
& \hspace{7cm} +s^{-1}(1-t^2)s^{-1}(1-t^2) z \notag  \\
&=-s^{-1}tsx-s^{-1}(1+t)ty + s^{-1}(1-t^2)s^{-1}(1-t^2) z \notag \\
                   \left( z\triangleleft_{u}y \right)\triangleleft_{u}x&=s^{-1}(1-t^2)\left( s^{-1}(1-t^2)z-s^{-1}tsy   \right)-s^{-1}tsx \notag \\
&=-s^{-1}tsx-s^{-1}(1-t^2)s^{-1}tsy+s^{-1}(1-t^2)s^{-1}(1-t^2) z \notag \\
&=-s^{-1}tsx-s^{-1}(1-t^2)t(1-t)^{-1}y+s^{-1}(1-t^2)s^{-1}(1-t^2) z \notag \\
&=-s^{-1}tsx-s^{-1}(1+t)ty + s^{-1}(1-t^2)s^{-1}(1-t^2) z \notag 
                \end{align}

\end{itemize}

\end{proof}

\begin{rmk}
For non-negative integers $n$ and $m$ with $n\leq m$, the projection $\pi_{n,m}$ preserves the semiquandle structures of $\mathcal{S}_{m}$ and $\mathcal{S}_{n}$. 
That is, $\pi_{n,m}\left(x\triangleleft_{o}y\right)=\pi_{n,m}(x)\triangleleft_{o}\pi_{n,m}(y)$ and $\pi_{n,m}\left(x\triangleleft_{u}y\right)=\pi_{n,m}(x)\triangleleft_{u}\pi_{n,m}(y)$ hold for all $x,y \in \mathcal{S}_{m}$. 
\end{rmk}

\section{Flat virtual links}\label{sec_flatvirtuallinks}
In this section, we recall the definition of flat virtual links and give some preliminaries we need. In this paper, flat virtual links are always oriented. 

\subsection{Definition of flat virtual links}
\begin{defini}\label{def_flatvirtuallinkdiagram}
A {\it flat virtual link diagram} is an immersion (i.e. locally injection) of disjoint union of oriented circles $S^{1}\amalg \cdots \amalg S^{1}$ into $\mathbb{R}^{2}$ such that the number of the preimage of every point of the range $\mathbb{R}^{2}$ is less than three and that the images of this immersion are transversal at the double points, with each double point being given the label ``flat'' or ``virtual'' as in Figure~\ref{flat_virtual}. 
We consider flat link diagrams as the images in $\mathbb{R}^{2}$. 
If the domain of a flat virtual link diagram is a single $S^{1}$, this is called a {\it flat virtual knot diagram}. 
Preimages of all flat crossings in a flat virtual link diagram cut the domain $S^{1}\amalg \cdots \amalg S^{1}$. 
We call the image of each connected component of this cut domain an arc of the flat virtual link diagram. 
\end{defini}

\begin{figure}[htbp]
 \begin{center}
  \includegraphics[width=50mm]{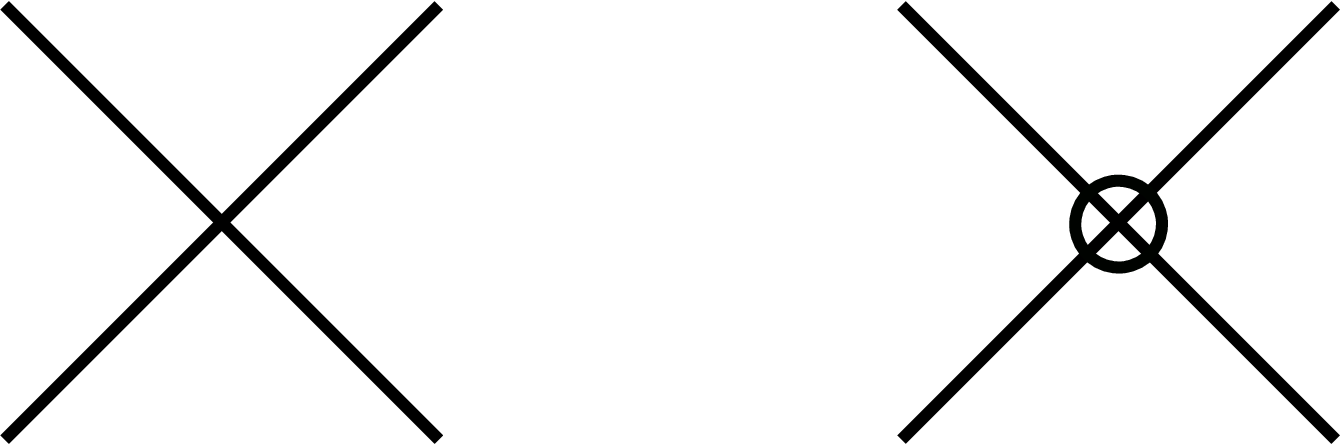}
 \end{center}
 \caption{Left: flat crossing \ \ \ \ \ Right: virtual crossing}
 \label{flat_virtual}
\end{figure}

\begin{figure}[htbp]
 \begin{center}
  \includegraphics[width=60mm]{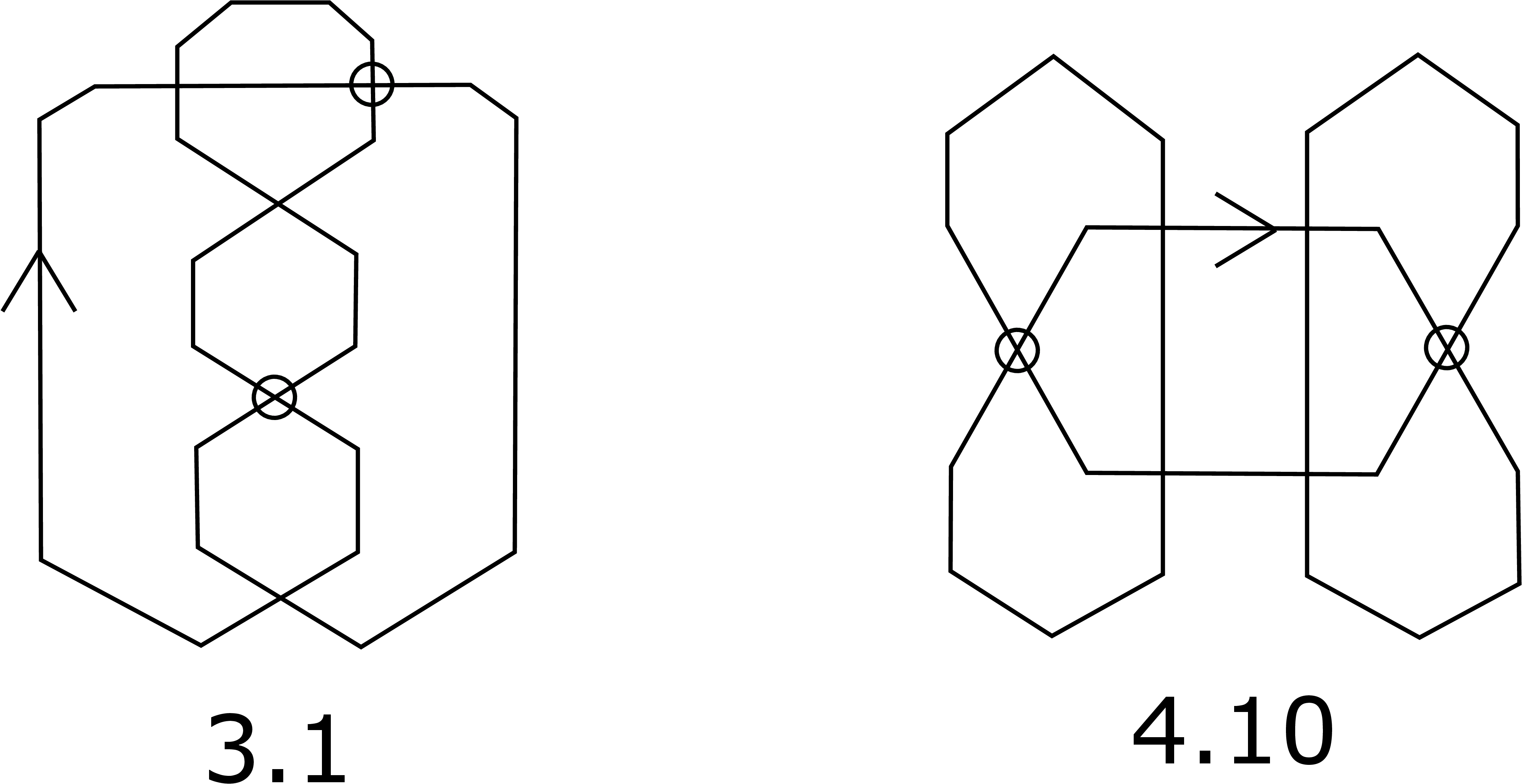}
 \end{center}
 \caption{Examples of flat virtual knot diagrams.}
 \label{example_flatvirtual}
\end{figure}

\begin{defini}
Two flat virtual link diagrams are said to be homotopic if they are related by a finite sequence of {\it flat Reidemeister moves} shown in Figure~\ref{flat_reidemeister} along with planar isotopies. In Figure~\ref{flat_reidemeister}, orientations are arbitrary. 
Each equivalence class of the quotient of the set of flat virtual link diagrams by the equivalence relation of homotopics is called a {\it flat virtual link}. 
The equivalence classes of flat virtual knot diagrams are called {\it flat virtual knots}. 
\end{defini}

\begin{figure}[htbp]
 \begin{center}
  \includegraphics[width=80mm]{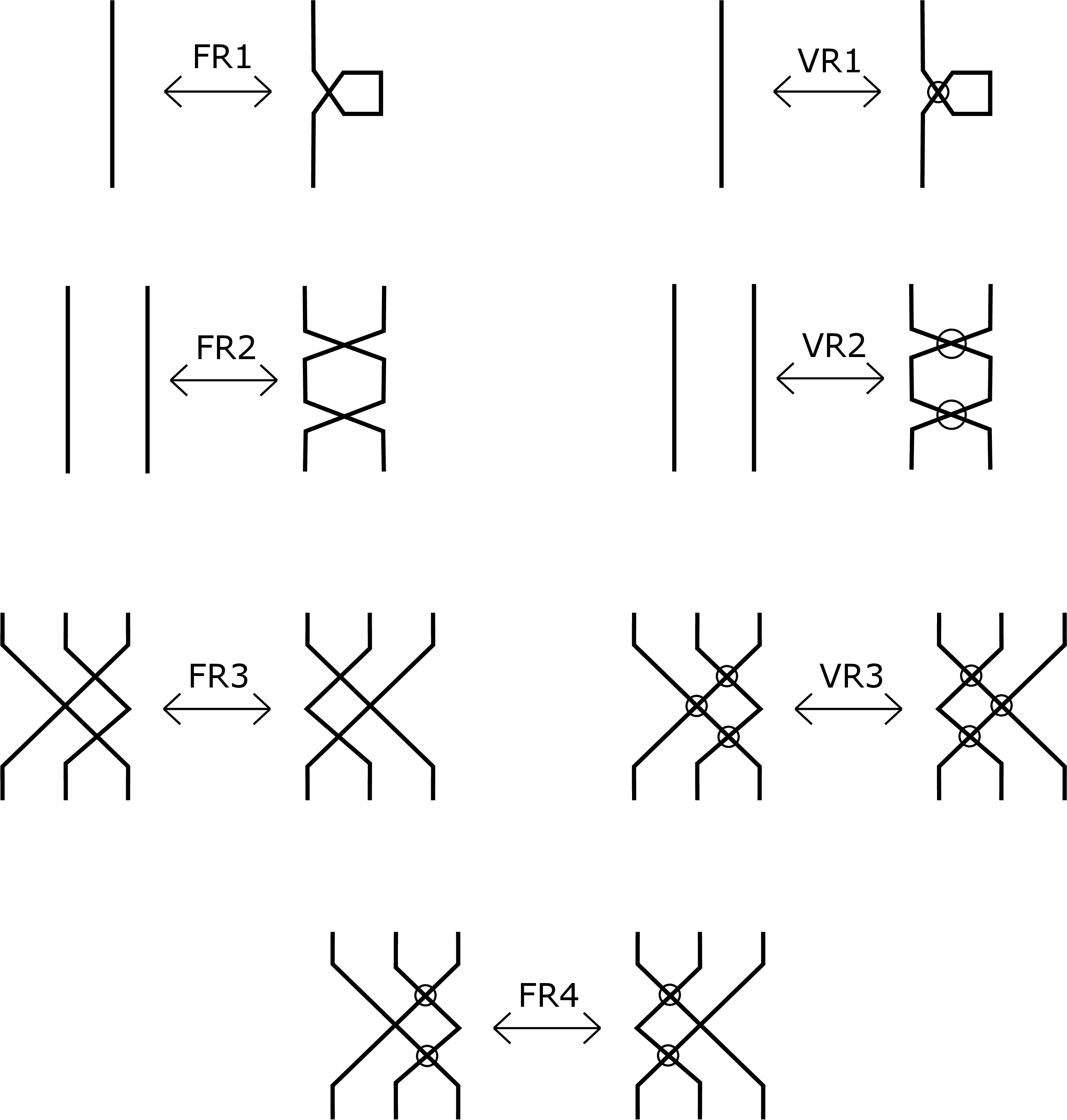}
 \end{center}
 \caption{Flat Reidemeister moves, where orientations are arbitrary.}
 \label{flat_reidemeister}
\end{figure}

By Theorem 1.2 of \cite{minimalgenerating}, all ordinary oriented Reidemeister moves for link diagrams are generated by five moves. 
By applying these arguments to moves FR1, FR2 and FR3, we have the following. 

\begin{fact}
All of moves FR1, FR2 and FR3 are generated by the five moves in Figure~\ref{fivemoves}.
\end{fact}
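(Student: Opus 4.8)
The plan is to deduce the flat statement from the classical one by \emph{flattening}. Discarding the over/under information at every flat crossing defines a forgetful operation from oriented (virtual) link diagrams to flat virtual link diagrams; under this operation each oriented Reidemeister move $\mathrm{R}i$ is sent to the corresponding flat move $\mathrm{FR}i$, while virtual crossings, if any are present, are left untouched. Since the moves FR1, FR2 and FR3 are purely local and involve only flat crossings, I would treat the assertion as a statement about the classical oriented Reidemeister moves ``read through'' this forgetful map, which is precisely the setting of Theorem 1.2 of \cite{minimalgenerating}.

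First I would fix, as the five moves of Figure~\ref{fivemoves}, the images under flattening of the five oriented Reidemeister moves that generate all oriented Reidemeister moves by Theorem 1.2 of \cite{minimalgenerating} (two instances coming from R1, two from R2 and one from R3, for the chosen orientation conventions). The content of that theorem is a finite list of \emph{local identities}: for every oriented Reidemeister move $M$ there is an explicit finite sequence of the five generators, applied inside a disk together with planar isotopy, whose net effect equals $M$.

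The key step is to observe that every such local identity descends along flattening. Concretely, given two flat virtual diagrams that differ by a single move FR1, FR2 or FR3 in a fixed orientation, I would lift them to oriented diagrams $D$ and $D'$ by assigning over/under data that agrees outside the disk and realizes the corresponding oriented move R1, R2 or R3 inside it; then $D$ and $D'$ differ by that classical move. Theorem 1.2 of \cite{minimalgenerating} connects $D$ and $D'$ by a finite sequence of the five classical generators, and flattening this sequence term by term connects the original flat diagrams by a finite sequence of the five moves of Figure~\ref{fivemoves}. Because such a lift exists for every orientation variant, this realizes all of FR1, FR2 and FR3 from the five generators.

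The main obstacle is bookkeeping rather than conceptual: one must check that flattening genuinely sends moves to moves and sequences to sequences, so that no classical identity degenerates once the over/under data is erased, and that the five generators of \cite{minimalgenerating} flatten to \emph{exactly} the five moves drawn in Figure~\ref{fivemoves} with matching orientations (in particular, that the chosen generators do not differ merely by over/under information, which would make two of them collapse). Since the identities of \cite{minimalgenerating} are local and do not interact with any virtual crossings elsewhere in the diagram, no extra work is needed to accommodate the virtual setting; the whole argument is the verbatim flattening of the classical one, which is why the statement is recorded as a Fact.
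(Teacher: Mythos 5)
Your proposal is correct and is essentially the paper's own argument: the paper also obtains this Fact by transferring Polyak's generation result (Theorem 1.2 of \cite{minimalgenerating}) to the flat setting through the forgetful correspondence between classical and flat crossings, with locality ensuring that virtual crossings elsewhere in the diagram play no role. The only difference is presentational --- the paper says ``by applying these arguments to FR1, FR2 and FR3,'' while you make the same transfer explicit by lifting each flat move to a classical one and flattening the resulting generating sequence term by term.
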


\begin{figure}[htbp]
 \begin{center}
  \includegraphics[width=60mm]{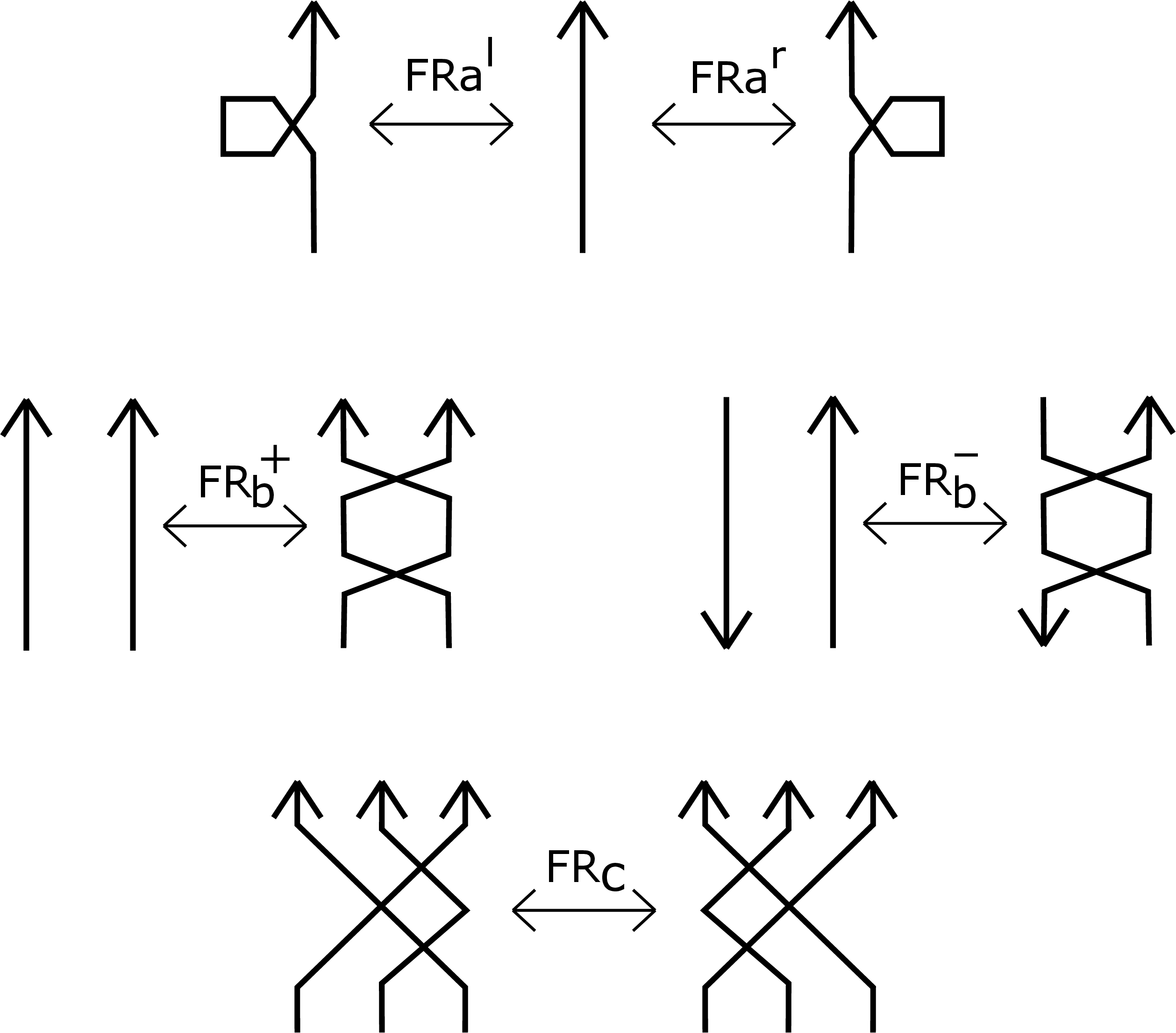}
 \end{center}
 \caption{Five moves which generate moves FR1, FR2 and FR3.}
 \label{fivemoves}
\end{figure}

\begin{rmk} \label{invariance_coloring}
The properties of semiquandles in Definition~\ref{def_semiquandle} translate the five moves ${{\rm FRa}}^{r}$, ${{\rm FRa}}^{l}$, ${{\rm FRb}}^{+}$, ${{\rm FRb}}^{-}$ and FRc in Figure~\ref{fivemoves} into algebraic axioms. 
Fix a semiquandle $X$. 
For a flat virtual link diagram $D$, its $X$-coloring is a map from the set of arcs (Definition~\ref{def_flatvirtuallinkdiagram}) to $X$ satisfying the relation described in Figure~\ref{coloring_crossing} at each flat crossing. 
By definition, virtual crossings are not concerned for $X$-colorings. 
Thus for checking the invariance of cardinality of the set of $X$-colorings for a flat virtual link, it is enough to check the invariance of this for the five moves. 
This is done as in Figure~\ref{coloring_invariance}. 
For example, in the top right of Figure~\ref{coloring_invariance}, $y=x\triangleleft_{u}y$ must hold. 
The property (1) in Definition~\ref{def_semiquandle} forces the coloring of the other endpoint to be $x$. 
Moreover, this $y$ is unique for given $x$ by the property (0) in Definition~\ref{def_semiquandle}. 
Thus the set of $X$-colorings of two flat virtual link diagrams related by ${{\rm FRa}}^{r}$ are one-to-one. 
The conditions for the endpoints matching under ${{\rm FRb}}^{+}$, ${{\rm FRb}}^{-}$ and FRc moves are equivalent to the properties (2) and (3) in Definition~\ref{def_semiquandle}. 
\end{rmk}

\begin{figure}[htbp]
 \begin{center}
  \includegraphics[width=30mm]{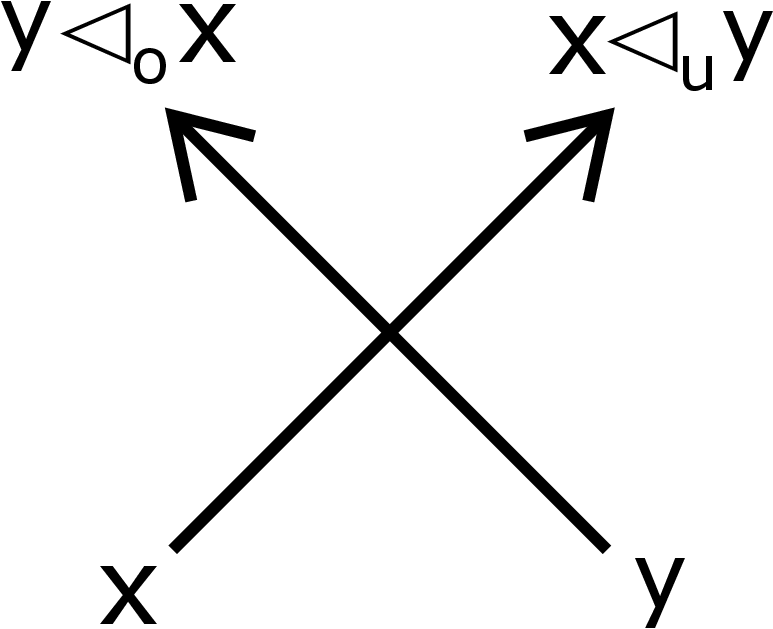}
 \end{center}
 \caption{Coloring relation}
 \label{coloring_crossing}
\end{figure}

\begin{figure}[htbp]
 \begin{center}
  \includegraphics[width=60mm]{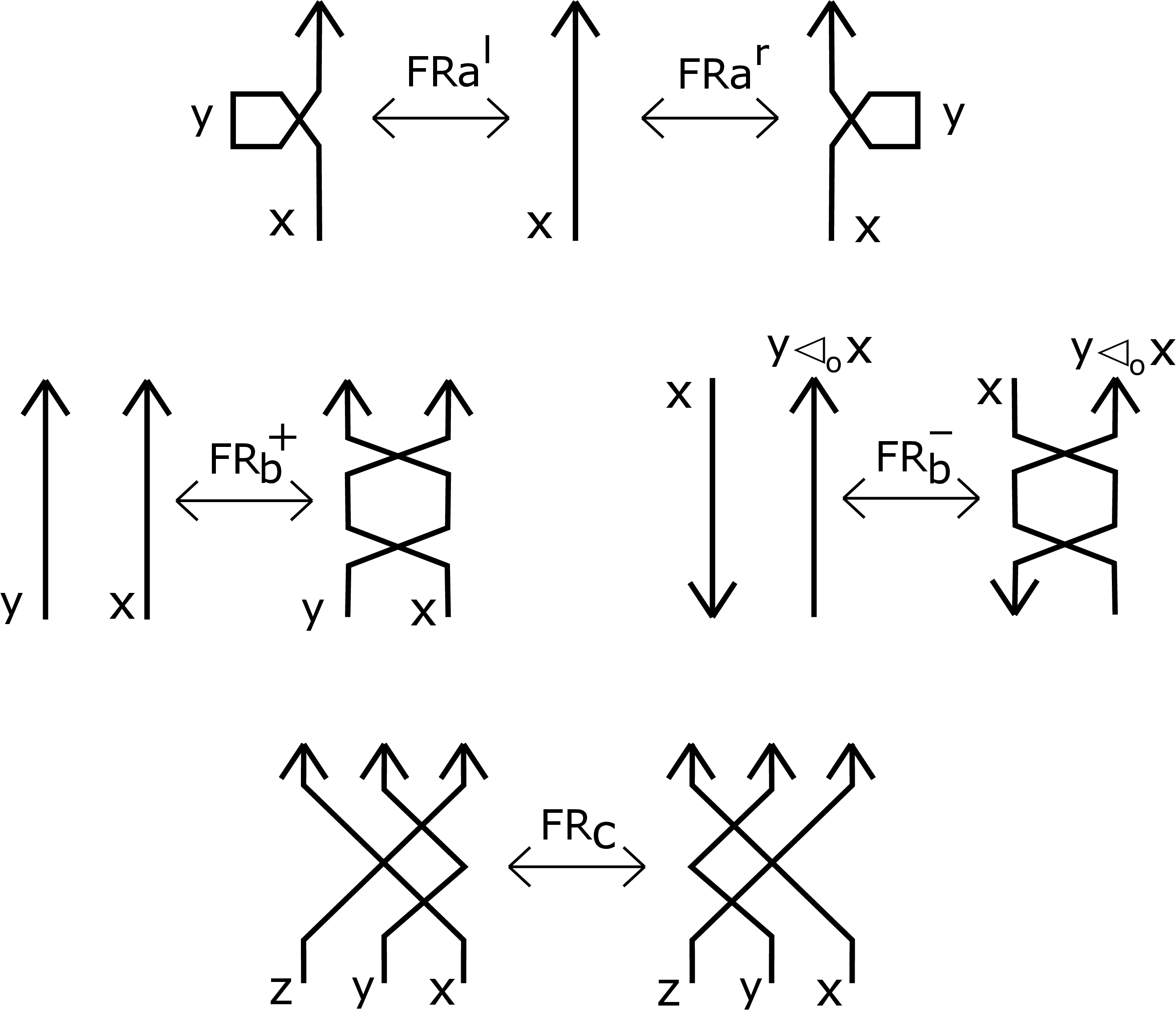}
 \end{center}
 \caption{Coloring on five moves}
 \label{coloring_invariance}
\end{figure}

\subsection{Gauss diagrams and u-polynomials}
In this subsection, we consider flat virtual knots. 
There is some useful presentation for flat virtual knots, called the {\it Gauss diagram}. 
Using this, we can compute some invariant of flat virtual knots, called the {\it u-polynomial}.  

\subsubsection{Gauss diagrams}
We start with the definition of Gauss diagrams and see their correspondences with flat virtual knots. 
\begin{defini}
A Gauss diagram is a counter-clockwise oriented circle on a plane with finitely many dashed oriented arrows whose endpoints are on the circle and pairwise distinct. 
\end{defini}

For a flat virtual knot diagram, we can construct a (unique) Gauss diagram as follows: 
Identify the domain $S^{1}$ of the flat virtual knot diagram with the counter-clockwise oriented circle on a plane. 
For each flat crossing of the flat virtual knot diagram, there are two points $p_1$ and $p_2$ on the circle which are identified by the immersion, which is in the definition of the flat virtual knot diagram. 
Let $v_1$ and $v_2$ be the image of velocity vectors of circle under the immersion at $p_1$ and $p_2$, respectively. 
Write a dashed arrow oriented from $p_{1}$ to $p_2$ if $<v_1,v_2>$ is positive orientation of the target space $\mathbb{R}^{2}$ of the immersion, and write it oriented from $p_{2}$ to $p_1$ otherwise. 

Conversely, we can construct a flat virtual knot diagram from a given Gauss diagram as follows: 
For each dashed oriented arrow in the Gauss diagram, put oriented flat crossing on $\mathbb{R}^{2}$. 
Then connect them according to the order given by the Gauss diagram.
We should connect them so that the orientation rule in the previous paragraph are satisfied. 
Along this connection, new crossings may occur. 
All such crossings are decorated with virtual crossings. 
By this way, we get (the image of) a flat virtual knot diagram from a given Gauss diagram. 
In this construction, there are many ways of connecting flat crossings. 
But two flat virtual knot diagrams for a given Gauss diagram with different connections are related by moves VR1, VR2, VR3 and FR4 in Figure~\ref{flat_reidemeister}. 
For other moves FR1, FR2 and FR3, there are counterpart moves in Gauss diagrams. 
See Figure 5 and 6 in \cite{chen2} for example. 
By the observation above, we see that Gauss diagrams modulo the Gauss diagram version of moves FR1, FR2 and FR3 are one-to-one to flat virtual knots.  

\begin{figure}[htbp]
 \begin{center}
  \includegraphics[width=50mm]{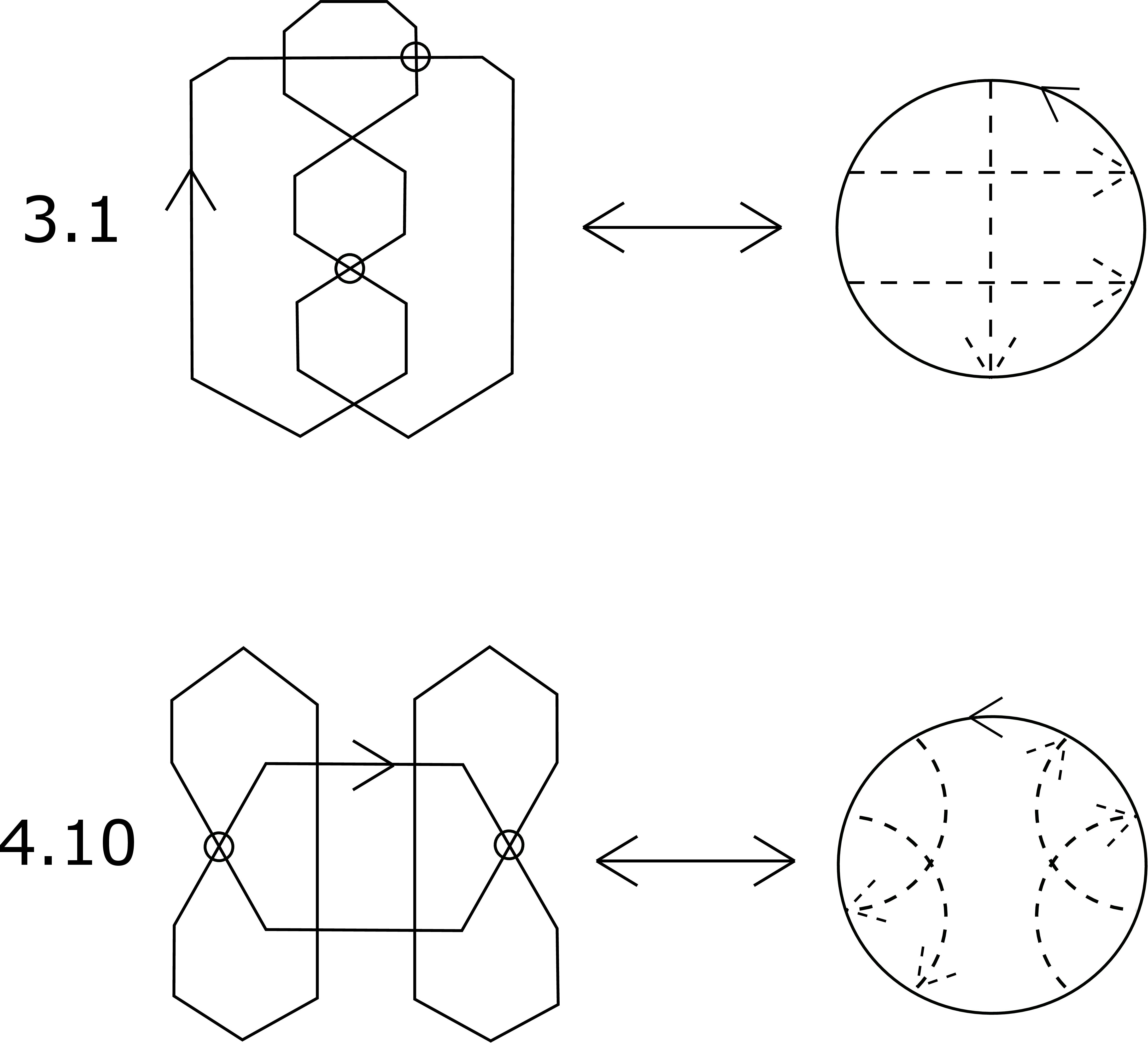}
 \end{center}
 \caption{Examples of Gauss diagrams of flat virtual knots.}
 \label{example_gauss}
\end{figure}

\subsubsection{u-polynomials} \label{u-polynomial}
{\it u-polynomials} are invariants of flat virtual knots, which are introduced in \cite{upolynomial}. 
They can be computed through Gauss diagrams. 

\begin{defini}
For a Gauss diagram $D$, assign each tail (i.e. starting point) of each dashed oriented arrow the sign $+$ and each head (i.e. terminal point) of it the sign $-$. 
The set of dashed oriented arrows of $D$ is denoted by $arr(D)$. 
For $e\in arr(D)$, set $n(e)$ to be the number of the sum of the signs on the part of the circle of $D$ which is the right side of $e$, where we see the arrow $e$ so that the head is above and the tail is below. 
\end{defini} 

\begin{defini}
Let $D$ be a Gauss diagram. 
Define $u_{D}(s)$ as $\displaystyle \sum_{e\in arr(D)} sign\left( n(e) \right)s^{|n(e)|}\in \mathbb{Z}[s]$, where $sign(0)$ is regarded as $0$. 
\end{defini}

\begin{fact}
Let $D$ and $D'$ be two Gauss diagrams related by the Gauss diagram version of moves FR1, FR2 and FR3. 
Then $u_{D}(s)=u_{D'}(s)$ holds. 
\end{fact}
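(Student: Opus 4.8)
The plan is to prove the invariance move by move on Gauss diagrams, exploiting two global bookkeeping facts about the sign assignment. First, since every dashed arrow carries exactly one tail (sign $+$) and one head (sign $-$), the sum of all signs around the circle of any Gauss diagram is $0$; consequently, for each arrow $e$ the sign sum on its right side equals minus the sign sum on its left side, and the two endpoints of any single arrow contribute $+1-1=0$ to the count on any arc that contains both of them. Second, by the Fact that all of FR1, FR2 and FR3 are generated by the five moves ${\rm FRa}^{r}$, ${\rm FRa}^{l}$, ${\rm FRb}^{+}$, ${\rm FRb}^{-}$ and ${\rm FRc}$, it suffices to verify $u_{D}=u_{D'}$ for the Gauss-diagram incarnation of each of these five, which fixes the orientations and so cuts down the number of cases. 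For each move I would split $arr(D)$ into the arrows created or destroyed by the move and the arrows that survive it, and then show that the former contribute nothing to the difference $u_{D}-u_{D'}$ while every surviving arrow keeps its value $n(e)$.

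For the R1-type generators ${\rm FRa}^{r}$ and ${\rm FRa}^{l}$, the move creates a single arrow $e_{0}$ whose two endpoints are adjacent on the circle. One side of $e_{0}$ is empty and so has sign sum $0$; by the first bookkeeping fact the other side sums to $0$ as well, whence $n(e_{0})=0$ and $e_{0}$ contributes ${\rm sign}(0)s^{0}=0$. Its two new endpoints form an adjacent tail--head pair, so for every surviving arrow they lie on one side and contribute $+1-1=0$ to that arrow's count; hence no $n(e)$ changes and $u_{D}=u_{D'}$. For the R2-type generators ${\rm FRb}^{+}$ and ${\rm FRb}^{-}$, the move creates two arrows $e_{1},e_{2}$ whose four endpoints sit as two adjacent pairs, at the two places where the doubled strand runs. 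For these oriented generators each adjacent pair is one tail and one head, so each pair contributes $0$ on whichever side of a surviving arrow it falls, and again every surviving $n(e)$ is unchanged. It then remains to compute $n(e_{1})$ and $n(e_{2})$ from the local cyclic word; using that an enclosed complete arrow contributes $0$ and that the total sign sum vanishes, one finds $n(e_{1})=-n(e_{2})$, so ${\rm sign}(n(e_{1}))s^{|n(e_{1})|}+{\rm sign}(n(e_{2}))s^{|n(e_{2})|}$ cancels and $u_{D}=u_{D'}$.

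The R3-type generator ${\rm FRc}$ is the substantive case. Here no arrow is created or destroyed: the three arrows persist, and the move merely permutes the cyclic positions of their six endpoints inside a small disk through which no other strand passes. Since that disk contains no foreign endpoints, for any surviving arrow $e$ the rearrangement never carries an endpoint across an endpoint of $e$, so each side of $e$ keeps the same multiset of signs and $n(e)$ is unchanged. The heart of the argument is to show that the three values $n(c_{12})$, $n(c_{13})$, $n(c_{23})$ of the participating arrows are individually preserved. I would establish this by writing the local cyclic words before and after the move and checking, for each fixed $c_{ij}$, that the four endpoints of the other two arrows are redistributed across the two sides of $c_{ij}$ in a sign-balanced way, the fixed orientation of ${\rm FRc}$ forcing these redistributions to occur in canceling tail--head pairs. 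I expect this final verification to be the main obstacle, since it is the only place where the detailed geometry of the move, rather than the simple counting principles above, is genuinely needed, and the care will lie in organizing the sub-cases so that the balanced-pair cancellation is manifest in each.
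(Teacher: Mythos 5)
Your overall strategy is exactly what the paper intends: its entire proof is the single sentence ``this can be checked by the direct computation,'' so a move-by-move verification on Gauss diagrams, organized through Polyak's five generators, is the same approach with the details actually supplied. Your two bookkeeping facts are correct, and your ${\rm FRa}$ and ${\rm FRb}$ cases are complete: the new chord of an ${\rm FRa}$ move has $n(e_{0})=0$ and its adjacent tail--head pair is invisible to every other chord; and for ${\rm FRb}$ (in both the parallel and antiparallel versions) each of the two strands receives exactly one tail and one head, since the local frames at the two crossings of a bigon have opposite orientations, from which the adjacent-pair cancellation and the identity $n(e_{1})=-n(e_{2})$ both follow, so the two new terms of $u_{D}$ cancel.

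The gap is that ${\rm FRc}$, the only case with real content, is left as an expectation rather than carried out, and the mechanism you predict is not quite the right one. Fix the chord $c_{12}$, with endpoints on strands $1$ and $2$. The only endpoints that can change sides of $c_{12}$ under the move are the endpoint of $c_{13}$ lying on strand $1$ and the endpoint of $c_{23}$ lying on strand $2$; the two endpoints on strand $3$ merely swap with each other and stay on one side of $c_{12}$. Depending on the orientation pattern of ${\rm FRc}$ and on the cyclic order of the three strand segments, these two endpoints either carry the same sign and trade places (one enters the right side of $c_{12}$ exactly as the other leaves it), or carry opposite signs and enter or leave together; only the second situation is your ``canceling tail--head pair,'' but in both situations $n(c_{12})$ is unchanged, and symmetrically for $c_{13}$ and $c_{23}$. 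A second point your plan must make explicit: the Gauss-diagram incarnation of the single planar move ${\rm FRc}$ is not one local picture, because the three strand segments can occur along the circle in two essentially different cyclic orders, $(1,2,3)$ and $(1,3,2)$, and both must be checked. Writing the three arcs of the circle outside the move as blocks with unknown sign sums $p$, $q$, $r$ subject to $p+q+r=0$ reduces each of these checks to a few lines, after which your argument closes; it is still the paper's ``direct computation,'' just honestly executed.
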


This can be checked by the direct computation. 
By this fact, $u_{D}(s)$ is an invariant of the flat virtual knot $\alpha$ one of whose representative is $D$. 
Denote it by $u_{\alpha}(s)$ and call it {\it u-polynomial} of $\alpha$.

\section{Quasideterminant}
In this section, we review the definition of {\it quasideterminant} and its property we will use. 
Quasideterminant is defined in \cite{quasideterminant}. 
It is defined on square matrix of not necessarily commutative coefficients, and is a counterpart of the ordinary determinant. 
Unlike the ordinary determinant, not only a matrix but also one entry are needed to define the quasideterminant. 
Moreover, quasideterminant is not always defined. 

\subsection*{Notation}
In this section, we work on not necessarily commutative ring $\mathcal{R}$ with unit $1$, fix positive integer $m$, and consider $m$-square matrices with $\mathcal{R}$-entries. 
\begin{itemize}
\item $I$ denotes the identity matrix, and $E_{i,j}$ denotes the matrix whose $(i,j)$-entry is $1$ and the others are $0$ for $1\leq i,j \leq m$. 
\item Set $M_{i,j}(\mu)$ to be $I+\mu E_{i,j}$ for $\mu \in \mathcal{R}$ and for $1\leq i\neq j\leq m$. 
\item Set $X_{i}(u)$ to be $I-E_{i,i}+uE_{i,i}$ for a unit element $u\in \mathcal{R}$ and for $1\leq i \leq m$. 
\item Set $P_{i,j}$ to be $I-E_{i,i}-E_{j,j}+E_{i,j}+E_{j,i}$ for $1\leq i\neq j\leq m$. 
\item For a $m$-square matrix $A$ and integers $1\leq i,j \leq m$, let $\left(A\right)_{i,j}$ denote the $(i,j)$-entry of $A$.
\item For a $m$-square matrix $A$ and integers $1\leq i,j \leq m$, let $A^{i,j}$ denote the $(m-1)$-square matrix obtained from $A$ by deleting the $i$-th row and the $j$-th column.
\item For a $m$-square matrix $A$ and integers $1\leq i,j \leq m$, let ${r_{i}}^{j}(A)$ denote the row vector obtained from the $i$-th row of $A$ by deleting the $j$-th entry.
\item  For a $m$-square matrix $A$ and integers $1\leq i,j \leq m$, let ${c^{i}}_{j}(A)$ denote the column vector obtained from the $j$-th column of $A$ by deleting the $i$-th entry. 
\end{itemize}

\begin{defini}
Let $A$ be a $m$-square matrix. Fix two integers $1\leq i,j \leq m$. 
If $A^{i,j}$ is invertible, define the $(i,j)$-quasideterminant $|A|_{i,j}$ as $(A)_{i,j}-{r_{i}}^{j}(A)\cdot \left(A^{i,j}\right)^{-1} \cdot {c^{i}}_{j}(A)$ when $m>1$ and as $(A)_{1,1}$ when $m=1$. 
\end{defini}

\begin{lem} \label{rowtransformation}
Let $A$ and $B$ be $m$-square matrices such that $B=M_{k,l}(\mu)\cdot A$ for $1\leq k\neq l\leq m$ and $\mu \in \mathcal{R}$. 
Then $|B|_{i,j}=|A|_{i,j}$ for $i\neq l$ and $1\leq j\leq m$ when one of $|A|_{i,j}$ and $|B|_{i,j}$ can be defined.  
\end{lem}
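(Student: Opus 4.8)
The plan is to recognize that left multiplication by $M_{k,l}(\mu)=I+\mu E_{k,l}$ is exactly the elementary row operation replacing the $k$-th row of $A$ by (row $k$)$\,+\,\mu\cdot$(row $l$) while fixing every other row. Since $M_{k,l}(\mu)$ requires $k\neq l$, the case $m=1$ is vacuous, so I assume $m\geq 2$. Because the hypothesis already fixes $i\neq l$, I would split into the two cases $i=k$ and $i\neq k$ (the latter automatically satisfying $i\neq l$ as well), and in each case compare the four ingredients $(A)_{i,j}$, ${r_{i}}^{j}(A)$, $A^{i,j}$, ${c^{i}}_{j}(A)$ of the quasideterminant with their $B$-counterparts.

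\emph{Case $i=k$.} Deleting the $i$-th row removes the only altered row, so $B^{i,j}=A^{i,j}$ and ${c^{i}}_{j}(B)={c^{i}}_{j}(A)$; in particular the definedness hypotheses for $A$ and for $B$ coincide. Only $(B)_{i,j}=(A)_{i,j}+\mu (A)_{l,j}$ and ${r_{i}}^{j}(B)={r_{i}}^{j}(A)+\mu\,{r_{l}}^{j}(A)$ change, so substituting into the defining formula gives $|B|_{i,j}-|A|_{i,j}=\mu\bigl[(A)_{l,j}-{r_{l}}^{j}(A)\,(A^{i,j})^{-1}\,{c^{i}}_{j}(A)\bigr]$, and the whole case reduces to showing the bracket vanishes. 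The key observation is that ${r_{l}}^{j}(A)$ is literally the row of $A^{i,j}=A^{k,j}$ indexed by $l$ (which survives since $l\neq k$); hence ${r_{l}}^{j}(A)\,(A^{k,j})^{-1}$ equals the standard row selector at index $l$, and applying it to ${c^{k}}_{j}(A)$ returns the $l$-th entry $(A)_{l,j}$. This identity is valid in the noncommutative ring $\mathcal{R}$ because the selector has only $0/1$ entries and matrix multiplication is associative.

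\emph{Case $i\neq k$.} Now row $i$ is untouched, so $(B)_{i,j}=(A)_{i,j}$ and ${r_{i}}^{j}(B)={r_{i}}^{j}(A)$, but both $A^{i,j}$ and ${c^{i}}_{j}(A)$ change, since rows $k$ and $l$ both survive the deletion of row $i$. Here $B^{i,j}=\tilde{M}A^{i,j}$, where $\tilde{M}$ is the elementary matrix on the reduced index set inducing the same row operation; thus $B^{i,j}$ is invertible if and only if $A^{i,j}$ is (which gives the equivalence of definedness), and $(B^{i,j})^{-1}=(A^{i,j})^{-1}\tilde{M}^{-1}$ with $\tilde{M}^{-1}=I-\mu\tilde{E}_{k,l}$. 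Writing ${c^{i}}_{j}(B)={c^{i}}_{j}(A)+\mu (A)_{l,j}\,\delta_{k}$ (with $\delta_{k}$ the standard basis column indexed by $k$), I would then check directly, using $\tilde{E}_{k,l}\,{c^{i}}_{j}(A)=(A)_{l,j}\,\delta_{k}$ and $\tilde{E}_{k,l}\,\delta_{k}=0$ (as $k\neq l$), that $(B^{i,j})^{-1}\,{c^{i}}_{j}(B)=(A^{i,j})^{-1}\,{c^{i}}_{j}(A)$. The perturbation of the minor cancels the perturbation of the deleted column, so $|B|_{i,j}=|A|_{i,j}$.

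The only genuinely delicate point is the bookkeeping forced by noncommutativity: one must keep every scalar of $\mathcal{R}$ on its correct side and avoid invoking any determinant-style manipulation. The two decisive facts—that a row of a matrix times that matrix's inverse is a coordinate selector (Case $i=k$), and that the induced elementary factor $\tilde{M}^{-1}$ annihilates the column perturbation (Case $i\neq k$)—are each robust to noncommutativity precisely because they involve only the $0/1$ selector $\delta_{k}$ and the nilpotent $\tilde{E}_{k,l}$. Confirming the definedness equivalence in each case (identical minors when $i=k$, minors related by an invertible elementary matrix when $i\neq k$) then completes the argument.
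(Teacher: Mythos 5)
Your proof is correct and follows essentially the same route as the paper's: the same case split into $i=k$ and $i\neq k$, with the same two key facts — that for $i=k$ the minor and deleted column are unchanged and ${r_{l}}^{j}(A)\cdot(A^{k,j})^{-1}$ is the $0/1$ row selector picking out $(A)_{l,j}$, and that for $i\neq k$ the minor and the deleted column are both multiplied by the induced elementary matrix, whose effect cancels inside $(B^{i,j})^{-1}{c^{i}}_{j}(B)$. The only cosmetic difference is that the paper expresses the second cancellation directly as ${c^{i}}_{j}(B)=M_{k',l'}(\mu)\,{c^{i}}_{j}(A)$ rather than expanding $\tilde{M}^{-1}=I-\mu\tilde{E}_{k,l}$ entrywise as you do.
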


\begin{proof}
Note that $B$ is the result of replacing the $k$-th row of $A$ with the row vector obtained by summing the $k$-th row of $A$ and the result of multiplying $\mu$ to the $l$-th row of $A$ from left. 
Take a pair of integers $i$ and $j$ with $i\neq l$. \\
(1) The case where $i\neq k$ \\
Set $k'$ to be $k$ if $k<i$ and $k-1$ if $k>i$, and set $l'$ to be $l$ if $l<i$ and $k-1$ if $l>i$. 
Then we have $(B)_{i,j}=(A)_{i,j}$, $B^{i,j}=R_{k',l'}(\mu)\cdot A^{i,j}$, ${r_{i}}^{j}(B)={r_{i}}^{j}(A)$ and ${c^{i}}_{j}(B)=R_{k',l'}(\mu)\cdot{c^{i}}_{j}(A)$. 
By this, we have $|B|_{i,j}=|A|_{i,j}$. \\
(2) The case where $i=k$ \\
Set $l'$ to be $l$ if $l<k$ and $l-1$ if $l>k$. 
Then we have $(B)_{k,j}=(A)_{k,j}+\mu(A)_{l,j}$, $B^{k,j}= A^{k,j}$, ${r_{k}}^{j}(B)={r_{k}}^{j}(A)+\mu {r_{l}}^{j}(A)$ and ${c^{k}}_{j}(B)={c^{k}}_{j}(A)$. 
Then $|B|_{k,j}=|A|_{k,j}+\mu (A)_{l,j}-\mu {r_{l}}^{j}(A)\cdot (A^{k,j})^{-1}\cdot {c^{k}}_{j}(A)=|A|_{k,j}$. 
Note that ${r_{l}}^{j}(A)\cdot (A^{k,j})^{-1}$ is the row vector of $m-1$ entries with $l'$-th entry is $1$ and the others are $0$ since ${r_{l}}^{j}(A)$ is the $l'$-th row of $A^{k,j}$ and that the $l'$-th entry of ${c^{k}}_{j}(A)$ is $(A)_{l,j}$. 
\end{proof}

\begin{lem} \label{divide}
 Let $A$ and $B$ be $m$-square matrices such that $B=X_{k}(u)\cdot A$ for a unit element $u\in \mathcal{R}$ and for $1\leq k\leq m$. 
Then $|B|_{i,j}=|A|_{i,j}$ for $i\neq k$ and $1\leq j\leq m$ when one of $|A|_{i,j}$ and $|B|_{i,j}$ can be defined.  
\end{lem}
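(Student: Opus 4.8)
The plan is to mimic case (1) of the proof of Lemma~\ref{rowtransformation}, which is in fact the only case that can arise here since we restrict to $i\neq k$. First I would record the basic effect of the left multiplication: $X_{k}(u)=I+(u-1)E_{k,k}$ is the identity matrix with its $(k,k)$-entry replaced by $u$, so $B=X_{k}(u)\cdot A$ leaves every row of $A$ untouched except the $k$-th, which is multiplied on the left by $u$. Since $u$ is a unit, $X_{k}(u)$ is invertible with inverse $X_{k}(u^{-1})$, and this invertibility is the engine of the whole argument.

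Now fix $i\neq k$ and $1\leq j\leq m$. Because the $i$-th row is unchanged, we immediately get $(B)_{i,j}=(A)_{i,j}$ and ${r_{i}}^{j}(B)={r_{i}}^{j}(A)$. The real content is the behaviour of the minor. Deleting the $i$-th row keeps the $k$-th row alive in $A^{i,j}$, where it lands in position $k'$ with $k'=k$ if $k<i$ and $k'=k-1$ if $k>i$. Tracking where the scaling goes, one obtains $B^{i,j}=X_{k'}(u)\cdot A^{i,j}$ together with ${c^{i}}_{j}(B)=X_{k'}(u)\cdot {c^{i}}_{j}(A)$, the very same $X_{k'}(u)$ occurring in both because the minor and the deleted column both inherit the $u$-scaling sitting in row $k'$. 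Since $X_{k'}(u)$ is invertible, $A^{i,j}$ is invertible if and only if $B^{i,j}$ is, which disposes of the clause ``when one of $|A|_{i,j}$ and $|B|_{i,j}$ can be defined''.

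With this in hand the computation closes by a cancellation that must respect the order of factors:
\begin{align}
|B|_{i,j}&=(B)_{i,j}-{r_{i}}^{j}(B)\cdot \left(B^{i,j}\right)^{-1}\cdot {c^{i}}_{j}(B) \notag \\
&=(A)_{i,j}-{r_{i}}^{j}(A)\cdot \left(A^{i,j}\right)^{-1}\left(X_{k'}(u)\right)^{-1}X_{k'}(u)\cdot {c^{i}}_{j}(A) \notag \\
&=(A)_{i,j}-{r_{i}}^{j}(A)\cdot \left(A^{i,j}\right)^{-1}\cdot {c^{i}}_{j}(A)=|A|_{i,j}. \notag
\end{align}

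The only genuine subtlety, and the step I would be most careful about, is the non-commutative bookkeeping in the middle line: the inverse of a product reverses the order, so $\left(X_{k'}(u)\cdot A^{i,j}\right)^{-1}=\left(A^{i,j}\right)^{-1}\left(X_{k'}(u)\right)^{-1}$, and it is precisely this $\left(X_{k'}(u)\right)^{-1}$ that meets the $X_{k'}(u)$ coming from ${c^{i}}_{j}(B)$ and cancels to the identity. Everything else is the index-shift bookkeeping for $k'$ already rehearsed in Lemma~\ref{rowtransformation}. Finally, the case $m=1$ is vacuous, since there it forces $i=k$, contrary to our hypothesis.
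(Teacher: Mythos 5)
Your proof is correct and follows essentially the same route as the paper's: both record that $(B)_{i,j}=(A)_{i,j}$, ${r_{i}}^{j}(B)={r_{i}}^{j}(A)$, $B^{i,j}=X_{k'}(u)\cdot A^{i,j}$ and ${c^{i}}_{j}(B)=X_{k'}(u)\cdot {c^{i}}_{j}(A)$ with the same index shift $k'$, and then cancel $\left(X_{k'}(u)\right)^{-1}X_{k'}(u)$ inside the quasideterminant formula. The only difference is that you explicitly write out the non-commutative cancellation, the equivalence of invertibility of the two minors (which settles the ``when one of them is defined'' clause), and the vacuous case $m=1$, all of which the paper leaves implicit in its closing ``By this, we have $|B|_{i,j}=|A|_{i,j}$.''
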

\begin{proof}
Note that $B$ is the result of multiplying the $k$-th row of $A$ by $u$ from the left. 
Set $k'$ to be $k$ if $k<i$ and $k-1$ if $k>i$. 
Then we have $(B)_{i,j}=(A)_{i,j}$, $B^{i,j}=X_{k'}(u)\cdot A^{i,j}$, ${r_{i}}^{j}(B)={r_{i}}^{j}(A)$ and ${c^{i}}_{j}(B)=X_{k'}(u)\cdot{c^{i}}_{j}(A)$. 
By this, we have $|B|_{i,j}=|A|_{i,j}$. 
\end{proof}

\begin{lem} \label{permutation}

\begin{itemize}
\item[(1)] Let $A$ and $B$ be $m$-square matrices such that $B=P_{k,k+1}\cdot A$ for $1\leq k\leq m-1$. Set $i'$ to be $i$ if $i\neq k, k+1$, to be $k+1$ if $i=k$ and to be $k$ if $i=k+1$. Then $|B|_{i',j}=|A|_{i,j}$ for $1\leq j\leq m$ when one of $|A|_{i,j}$ and $|B|_{i',j}$ can be defined. 
\item[(2)] Let $A$ and $B$ be $m$-square matrices such that $B=A\cdot P_{k,k+1} $ for $1\leq k\leq m-1$. Set $j'$ to be $j$ if $j\neq k, k+1$, to be $k+1$ if $j=k$ and to be $k$ if $j=k+1$. Then $|B|_{i,j'}=|A|_{i,j}$ for $1\leq j\leq m$ when one of $|A|_{i,j}$ and $|B|_{i',j}$ can be defined. 
\end{itemize}
\end{lem}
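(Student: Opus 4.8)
The plan is to mimic the case analysis used in Lemmas~\ref{rowtransformation} and~\ref{divide}, tracking how the four ingredients $(A)_{i,j}$, ${r_{i}}^{j}(A)$, $A^{i,j}$ and ${c^{i}}_{j}(A)$ of the quasideterminant transform under the swap of rows $k$ and $k+1$ (for part (1)) or columns $k$ and $k+1$ (for part (2)). The structural facts I would lean on are that $P_{k,k+1}$ is an involution, $P_{k,k+1}^{2}=I$, that left multiplication by $P_{k,k+1}$ interchanges rows $k$ and $k+1$ and fixes the others, and symmetrically that right multiplication interchanges the corresponding columns.

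For part (1) I would split into three cases according to the position of $i$ relative to $k$ and $k+1$. When $i\neq k,k+1$ (so $i'=i$) the $i$-th row is untouched, giving $(B)_{i,j}=(A)_{i,j}$ and ${r_{i}}^{j}(B)={r_{i}}^{j}(A)$; after deleting row $i$ and column $j$, rows $k$ and $k+1$ both survive and remain adjacent, so $B^{i,j}=P_{k'',k''+1}\cdot A^{i,j}$ and ${c^{i}}_{j}(B)=P_{k'',k''+1}\cdot {c^{i}}_{j}(A)$ for the appropriate shifted index $k''$. Then $\left(B^{i,j}\right)^{-1}=\left(A^{i,j}\right)^{-1}\cdot P_{k'',k''+1}$, and the two copies of $P_{k'',k''+1}$ cancel by the involution property, yielding $|B|_{i,j}=|A|_{i,j}$.

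The heart of the argument is the case $i=k$ (target index $i'=k+1$); the case $i=k+1$ is then symmetric under the same involution. Here I would verify directly that deleting the $(k+1)$-st row of $B$, which is precisely the original $k$-th row of $A$ that was swapped down, recovers the submatrix $A^{k,j}$ unchanged and in the same row order, so that $B^{k+1,j}=A^{k,j}$ literally; simultaneously $(B)_{k+1,j}=(A)_{k,j}$, ${r_{k+1}}^{j}(B)={r_{k}}^{j}(A)$ and ${c^{k+1}}_{j}(B)={c^{k}}_{j}(A)$. Substituting into the definition gives $|B|_{k+1,j}=|A|_{k,j}$. The main obstacle is exactly this index bookkeeping: one must check carefully that, although the swap permutes two rows, its effect on the deleted submatrix is to leave it equal (not merely row-permuted), which is what makes the tracked index $i'$ the right one and lets the quasideterminant match on the nose.

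For part (2) I would run the mirror-image argument with right multiplication by $P_{k,k+1}$, interchanging columns $k$ and $k+1$ and swapping the roles of ${r_{i}}^{j}$ and ${c^{i}}_{j}$ throughout, with the delicate case now being $j=k$ versus $j=k+1$. I would deliberately not try to deduce part (2) from part (1) by transposition, since over a non-commutative ring $\mathcal{R}$ one has $(MN)^{T}\neq N^{T}M^{T}$ in general, so the quasideterminant is not compatible with transpose; carrying out the column version directly is cleaner and sidesteps this pitfall.
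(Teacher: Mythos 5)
Your proposal is correct and follows essentially the same route as the paper's proof: the same three-case analysis on the position of $i$, tracking $(A)_{i,j}$, ${r_{i}}^{j}(A)$, $A^{i,j}$, ${c^{i}}_{j}(A)$, cancelling the two copies of $P_{k'',k''+1}$ via the involution property in the untouched-row case, and observing that the deleted submatrix is literally unchanged when $i=k$ or $i=k+1$. The only differences are cosmetic: the paper writes out the $i=k+1$ case explicitly rather than by symmetry, and it silently omits part (2) (``We prove only (1)''), which you instead sketch directly as the mirror column argument --- your warning that one cannot reduce (2) to (1) by transposition over a noncommutative ring is a sound observation.
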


\begin{proof}
We prove only (1). 
In this case, note that $B$ is obtained from $A$ by exchanging the $k$-th row and the $(k+1)$-th row. \\
(i) The case where $i\neq k, k+1$\\
Set $k'$ to be $k$ if $k<i$ and $k-1$ if $k>i+1$. 
Then we have $(B)_{i',j}=(B)_{i,j}=(A)_{i,j}$, $B^{i',j}=B^{i',j}=P_{k',k'+1}\cdot A^{i,j}$, ${r_{i'}}^{j}(B)={r_{i}}^{j}(B)={r_{i}}^{j}(A)$ and ${c^{i'}}_{j}(B)={c^{i}}_{j}(B)=P_{k',k'+1}\cdot {c^{i}}_{j}(A)$. 
By computation, we get $|B|_{i',j}=|A|_{i,j}$. \\
(ii) The case where $i=k$\\
We have $(B)_{i',j}=(B)_{k+1,j}=(A)_{k,j}$, $B^{i',j}=B^{k+1,j}=A^{k,j}$, ${r_{i'}}^{j}(B)={r_{k+1}}^{j}(B)={r_{k}}^{j}(A)$ and ${c^{i'}}_{j}(B)={c^{k+1}}_{j}(B)={c^{k}}_{j}(A)$. 
By computation, we get $|B|_{i',j}=|A|_{i,j}$. \\
(iii) The case where $i=k+1$\\
We have $(B)_{i',j}=(B)_{k,j}=(A)_{k+1,j}$, $B^{i',j}=B^{k,j}=A^{k+1,j}$, ${r_{i'}}^{j}(B)={r_{k}}^{j}(B)={r_{k+1}}^{j}(A)$ and ${c^{i'}}_{j}(B)={c^{k}}_{j}(B)={c^{k+1}}_{j}(A)$. 
By computation, we get $|B|_{i',j}=|A|_{i,j}$.
\end{proof}

\section{$\mathcal{S}_{n}$-coloring invariant of flat virtual knots}
In this section, we introduce an invariant of flat virtual knot invariant by considering $\mathcal{S}_{n}$-coloring. 

\subsection{Construction}
Let $\alpha$ be a flat virtual knot. Take one of its representative flat virtual knot diagram $D$. 
Let $V_{flat}$ be a set of double points on the image of the immersion which are decorated with flat crossings, and $V_{fake}$ a (possibly empty) set of finite points on the image of the immersion such that $V_{flat}\cap V_{fake}= \emptyset$. 
An element in $V_{flat}$ is called a flat vertex and that in $V_{fake}$ is called a fake vertex. 
Set $V$ to be $V_{flat}\cup V_{fake}$. We suppose that $V$ is not empty by introducing $V_{fake}$ if necessary.  
The preimage of $V$ by the immersion cuts the domain $S^{1}$. 
Abusing terminologies, we call the image of each component of this cut domain also an arc of $(D,V)$. 
Set the number $k$ to be the order of $V$. Note that the number of arcs is also $k$. 

For the pair $(D,V)$, we make a $k$-square matrix with $\mathcal{S}_{n}$ entries as follows: 
``Choose'' one arc of $(D,V)$ and give the arc the label $1$. From this arc, give the other arcs the labels $2,\dots ,k$ in order with respect to the orientation of $D$. 
We call the arc labeled with $i$ {\it the $i$-th arc}. 
At each tail endpoint of $i$-th arc, we take a row vector of $k$-entries, called the $i$-th row vector according to the types of vertex at the tail endpoint as follows, where $(-1)$-th arc or $(-1)$-th entry is regarded as $k$-th arc or $k$-th entry:

\begin{itemize}
\item When the vertex is as in the left of Figure~\ref{ithrow} and $j\neq i-1, i$, the $i$-th row vector is the row vector whose $i$-th entry is $1$, $(i-1)$-th entry is $-s$, $j$-th entry is $-t$ and the other entries are $0$. 
\item When the vertex is as in the left of Figure~\ref{ithrow} and $j= i-1$, the $i$-th row vector is the row vector whose  $i$-th entry is $1$, $(i-1)$-th entry is $-s-t$ and the other entries are $0$. 
\item When the vertex is as in the left of Figure~\ref{ithrow} and $j= i$, the $i$-th row vector is the row vector whose  $i$-th entry is $1-t$, $(i-1)$-th entry is $-s$ and the other entries are $0$. 
\item When the vertex is as in the middle of Figure~\ref{ithrow} and $j\neq i-1, i$, the $i$-th row vector is the row vector whose $i$-th entry is $1$, $(i-1)$-th entry is $-s^{-1}(1-t^2)$, $j$-th entry is $s^{-1}ts$ and the other entries are $0$. 
\item When the vertex is as in the middle of Figure~\ref{ithrow} and $j=i-1$, the $i$-th row vector is the row vector whose $i$-th entry is $1$, $(i-1)$-th entry is $-s^{-1}(1-t^2)+s^{-1}ts$ and the other entries are $0$. 
\item When the vertex is as in the middle of Figure~\ref{ithrow} and $j=i$, the $i$-th row vector is the row vector whose $i$-th entry is $1+s^{-1}ts$, $(i-1)$-th entry is $-s^{-1}(1-t^2)$ and the other entries are $0$. 
\item When the vertex is as in the right of Figure~\ref{ithrow} and $i\neq i-1$, the $i$-th row vector is the row vector whose $i$-th entry is $1$, $(i-1)$-th entry is $-1$ and the other entries are $0$. 
\item When the vertex is as in the right of Figure~\ref{ithrow} and $i= i-1$, the $i$-th row vector is the row vector all of whose entries are $0$. 
\end{itemize}

\begin{figure}[htbp]
 \begin{center}
  \includegraphics[width=80mm]{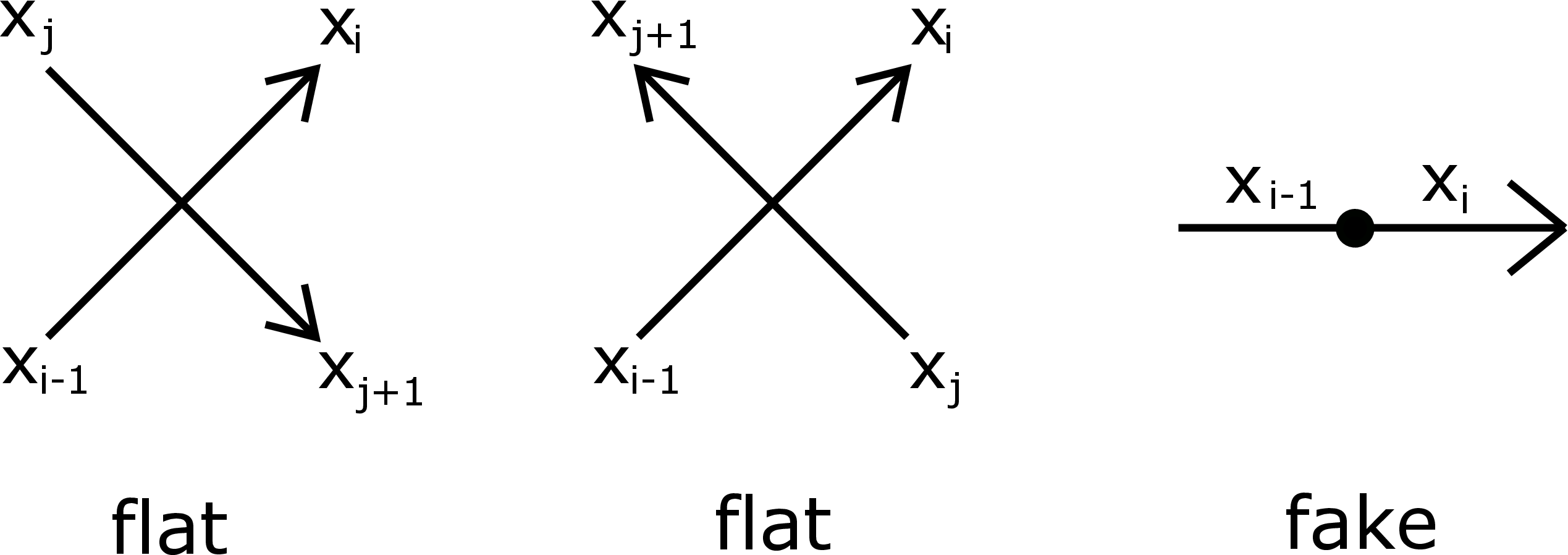}
 \end{center}
 \caption{Cases of vertex at the tail endpoint of $i$-the arc. }
 \label{ithrow}
\end{figure}

Note that neither of the cases of $i=i-1$ nor $j=j-1$ occurs in the left and middle of Figure~\ref{ithrow} since $D$ is a flat virtual ``knot'' diagram, and that $i=i-1$ occurs in the right of Figure~\ref{ithrow} if and only if $k=1$. 
Let $A$ be the $k$-square matrix whose $i$-th row is the $i$-th row constructed above. 

\begin{claim} \label{kkquasideterminant}
The $(k,k)$-quasideterminant $|A|_{k,k}$ of $A$ can be defined, and its image under $\pi_{0,n}$ is $0$. 
\end{claim}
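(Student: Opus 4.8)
The plan is to push everything through the projection $\pi_{0,n}\colon \mathcal{S}_n\to \mathcal{S}_0=\mathbb{Z}[s^{\pm1}]$, where both the definability of $|A|_{k,k}$ and its vanishing become transparent because $\mathbb{Z}[s^{\pm1}]$ is commutative. First I would compute $M=\pi_{0,n}(A)$ entrywise. Applying $\pi_{0,n}$ to each of the eight cases describing the $i$-th row annihilates every positive power of $t$: each diagonal entry $(A)_{i,i}$ (one of $1$, $1-t$, $1+s^{-1}ts$) maps to $1$, the extra entry in column $j$ (one of $-t$, $s^{-1}ts$) maps to $0$, and the entry in column $i-1$ maps to $-\epsilon_i$, where $\epsilon_i=s$ at an $\triangleleft_o$-vertex (left of Figure~\ref{ithrow}), $\epsilon_i=s^{-1}$ at an $\triangleleft_u$-vertex (middle), and $\epsilon_i=1$ at a fake vertex (right), with the convention $0\equiv k$. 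Hence $M$ is the cyclic lower-bidiagonal matrix with $(M)_{i,i}=1$ and $(M)_{i,i-1}=-\epsilon_i$.

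For definability, note that $M^{k,k}$ is obtained from $M$ by deleting row $k$ and column $k$; this removes the sole above-diagonal entry $(M)_{1,k}=-\epsilon_1$, leaving a genuinely lower-triangular matrix with all diagonal entries equal to $1$, so $\det M^{k,k}=1$ and $M^{k,k}$ is invertible over $\mathbb{Z}[s^{\pm1}]$. To lift invertibility back to $\mathcal{S}_n$ I would use the matrix analogue of Lemma~\ref{unit}: the kernel of $\pi_{0,n}$ is the span of the monomials of positive degree, which by Remark~\ref{degree} satisfies $(\ker\pi_{0,n})^{n+1}=0$ and is therefore a nilpotent two-sided ideal. Applying $\pi_{0,n}$ entrywise gives a surjective ring homomorphism from the ring of $(k-1)$-square matrices over $\mathcal{S}_n$ onto that over $\mathbb{Z}[s^{\pm1}]$, with nilpotent kernel, and units lift along any surjection with nilpotent kernel. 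Thus $A^{k,k}$ is invertible over $\mathcal{S}_n$ and $|A|_{k,k}$ is defined. The degenerate case $k=1$ (a single fake vertex, $A=(0)$) is treated directly: $|A|_{1,1}=(A)_{1,1}=0$.

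For the vanishing, since $\pi_{0,n}$ is a ring homomorphism applied entrywise it commutes with taking entries, products, and inverses of matrices, so $\pi_{0,n}(|A|_{k,k})=|M|_{k,k}$. As $\mathbb{Z}[s^{\pm1}]$ is commutative, $|M|_{k,k}=\det M/\det M^{k,k}=\det M$, and expanding the cyclic bidiagonal matrix (only the identity and the full $k$-cycle permutations contribute) gives $\det M=1-\prod_{i=1}^{k}\epsilon_i$. The crux is the identity $\prod_{i=1}^{k}\epsilon_i=1$: each flat crossing is the tail-vertex of exactly two arcs, and the coloring rule of Figure~\ref{coloring_crossing} assigns the $\triangleleft_o$-relation to one of them (contributing $s$) and the $\triangleleft_u$-relation to the other (contributing $s^{-1}$), while each fake vertex contributes $1$; hence $\prod_{i=1}^{k}\epsilon_i=s^{|V_{flat}|}s^{-|V_{flat}|}=1$. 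Therefore $\pi_{0,n}(|A|_{k,k})=1-1=0$.

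I expect the main obstacle to be this last combinatorial balancing, namely verifying from the coloring relations that the two arcs issuing from each flat crossing are always of opposite type so that the $s$-exponents cancel, together with pinning down the matrix version of Lemma~\ref{unit} via nilpotency of $\ker\pi_{0,n}$. Once these are in place, the determinant evaluations $\det M^{k,k}=1$ and $\det M=1-\prod_i\epsilon_i$ are routine, and the compatibility of $\pi_{0,n}$ with the quasideterminant is immediate from it being an entrywise ring homomorphism.
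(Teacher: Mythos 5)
Your proof is correct, but it takes a genuinely different route from the paper's. The paper proves the claim by Gaussian elimination inside $\mathcal{S}_n$ itself: it first eliminates in the projected matrix $\pi_{0,n}(A)$ over $\mathbb{Z}[s^{\pm1}]$, then lifts that elimination to $\mathcal{S}_n$ row by row, using the scalar Lemma~\ref{unit} to keep the pivots invertible, and finally invokes the invariance of quasideterminants under row operations (Lemmas~\ref{rowtransformation} and \ref{divide}, taking care never to add or rescale the $k$-th row) to identify $|A|_{k,k}$ with the last pivot $F$, whose projection is visibly $0$. You avoid elimination over $\mathcal{S}_n$ entirely: definability comes from lifting the invertibility of $M^{k,k}$ through the entrywise projection, whose kernel is nilpotent by Remark~\ref{degree} --- in effect a matrix-level generalization of Lemma~\ref{unit}, obtained by the standard nilpotent-kernel lifting argument rather than the paper's induction on $n$ --- and vanishing comes from pushing the defining formula of the quasideterminant through $\pi_{0,n}$ and evaluating $|M|_{k,k}=\det M/\det M^{k,k}=1-\prod_{i=1}^{k}\epsilon_i$ in the commutative ring $\mathbb{Z}[s^{\pm1}]$. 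Two points of comparison are worth recording. First, your verification that $\prod_{i}\epsilon_i=1$ (each flat crossing is the tail endpoint of exactly two arcs, one governed by $\triangleleft_o$ contributing $s$ and one by $\triangleleft_u$ contributing $s^{-1}$, while fake vertices contribute $1$) is precisely the content that the paper compresses into the phrase that the $(k,k)$-entry becomes $0$ ``since $D$ is a flat virtual `knot' diagram,'' so on this point your write-up is more explicit than the original. Second, what the paper's longer elimination buys is the explicit reduced form of $A$ --- identity block, a last column of units $u_1,\dots,u_{k-1}$, and $F$ in the corner --- which is reused later: Remark~\ref{rmk_sqncoloring} interprets $F$ through that normal form as the obstruction to closing up a coloring of the cut diagram, and the proof of Claim~\ref{independence_startingarc} runs the same elimination again with a different pivot discipline. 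Your argument, while shorter and more conceptual for the claim itself, does not produce that normal form, so the paper's machinery would still be needed downstream.
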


\begin{proof}
If $k=1$, then $A$ is the zero-matrix of size $1$. Thus the statement holds. Suppose that $k>1$. 
First, consider the matrix $A'=\pi_{0,n}(A)$, whose $(i,j)$-entry is $\pi_{0,n}\left((A)_{i,j}\right)$. 
It can be regarded as the result of substituting $0$ for $t$. 
$A'$ is a form of
$\begin{pmatrix}
1&0&0&\cdots&0&-s^{\epsilon}\\
-s^{\epsilon}&1&0&\cdots&0&0\\
0&-s^{\epsilon}&1&\cdots&0&0\\
\vdots& \vdots &\vdots  &\ddots&\vdots & \vdots \\
0&0&0&\cdots&1&0\\
0&0&0&\cdots &-s^{\epsilon}&1
\end{pmatrix}$, where $\epsilon$'s are in $\{-1,0,1\}$. 
We transform this matrix as follows: 
For each $i=1,\dots,k-1$ in this order, delete all entries of the $i$-th column of $A'$ but the $(i,i)$-entry using the $i$-th row of $A'$.
Note that at each steps of this deletion, all diagonals but the $(k,k)$-entry are unit elements of $\mathcal{S}_{0}=\mathbb{Z}[s^{\pm1}]$, that after this deletion, each $(i,k)$-entry is unit element of $\mathcal{S}_{0}=\mathbb{Z}[s^{\pm1}]$ and that after this deletion, the $(k,k)$-entry becomes $0$ since $D$ is a flat virtual ``knot'' diagram. 
The matrix after this deletion is a form of 
$\begin{pmatrix}
1&0&0&\cdots&0&-s^{n_1}\\
0&1&0&\cdots&0&-s^{n_2}\\
0&0&1&\cdots&0&-s^{n_3}\\
\vdots& \vdots &\vdots  &\ddots&\vdots & \vdots \\
0&0&0&\cdots&1&-s^{n_{k-1}}\\
0&0&0&\cdots &0&0
\end{pmatrix}$, where $n_{i}$'s are integers. 
Note also that during this deletion, the $k$-th row is not added to another row. 

Next, following the above deletion, we transform $A$. 
After deletion, the matrix is a form of 
$\begin{pmatrix}
1+\square t&\square t&\square t&\cdots&\square t&-s^{n_1}+\square t\\
\square t&1+\square t&\square t&\cdots&\square t&-s^{n_2}+\square t\\
\square t&\square t&1+\square t&\cdots&\square t&-s^{n_3}+\square t\\
\vdots& \vdots &\vdots  &\ddots&\vdots & \vdots \\
\square t&\square t&\square t&\cdots&1+\square t&-s^{n_{k-1}}+\square t\\
\square t&\square t&\square t&\cdots &\square t&\square t
\end{pmatrix}$, where $\square t$'s are $0$'s or some sums of monomials of $\mathcal{S}_{n}$ each of whose degree is grater than $0$.   
Note that all diagonals but the $(k,k)$-entry and the $(i,k)$-entry for $i\neq k$ are unit elements by Lemma~\ref{unit}. 
We multiple a unit element for each $i$-th row from left so that the $(i,i)$-entry becomes $1$ for $i=1,\dots,k-1$. 
Note that each $(i,k)$-entry remains to be a unit element for $i=1,\dots, k-1$, and the other entries are $0$'s or some sums of monomials of $\mathcal{S}_{n}$ each of whose degree is grater than $0$. 
After this multiplication, the matrix is a form of 
$\begin{pmatrix}
1&\square t&\square t&\cdots&\square t&-s^{n_1}+\square t\\
\square t&1&\square t&\cdots&\square t&-s^{n_2}+\square t\\
\square t&\square t&1&\cdots&\square t&-s^{n_3}+\square t\\
\vdots& \vdots &\vdots  &\ddots&\vdots & \vdots \\
\square t&\square t&\square t&\cdots&1&-s^{n_{k-1}}+\square t\\
\square t&\square t&\square t&\cdots &\square t&\square t
\end{pmatrix}$. 
Then we continue to delete. 
Using the first row, delete all entries of the first column of the matrix but the $(1,1)$-entry. %This can be done since the $(1,1)$-entry is a unit element. 
Note that in this process, the first row is added to another row after multiplying some sum of monomials each of whose degree is grater than $0$ from left. 
This implies that after this process, all diagonals but the $(k,k)$-entry and $(i,k)$-entry for $i\neq k$ remain to be unit elements by Lemma~\ref{unit}, and that the $(k,k)$-entry remains to be $0$'s some sum of monomials of $\mathcal{S}_n$ each of whose degree is grater than $0$. After this, delete all entries of the second column of the matrix but the $(2,2)$-entry using the second row. 
Repeating this until we delete all entries of the $(k-1)$-th column of the matrix but the $(k-1,k-1)$-entry using the $(k-1)$-th row. 
The obtained matrix is a form of 
$\begin{pmatrix}
1&0&0&\cdots&0&u_{1}\\
0&1&0&\cdots&0&u_2\\
0&0&1&\cdots&0&u_3\\
\vdots& \vdots &\vdots  &\ddots&\vdots & \vdots \\
0&0&0&\cdots&1&u_{k-1}\\
0&0&0&\cdots &0&F
\end{pmatrix}$, where $u_{i}$'s are unit elements of $\mathcal{S}_n$ and $F$ is an element of $\mathcal{S}_{n}$ such that $\pi_{0,n}(F)=0$. 
Note that during this deletion, the $k$-th row is not added to another row. 
We see that the $(k,k)$-quasideterminant of this matrix is $F$. 
Since this matrix is obtained from $A$ by a sequence of operations of adding some row but the $k$-th row after multiplying an element of $\mathcal{S}_n$ from left to another row and multiplying some row but the $k$-th row by a unit element from left, we see that the $(k,k)$-quasideterminant of $A$ is also $F$ by Lemma~\ref{divide} and \ref{rowtransformation}. 
\end{proof}

\begin{defini}\label{def_sqncoloring}
We call this $|A|_{k,k}\in \mathcal{S}_{n}$ modulo multiplying unit element $U$ of $\mathcal{S}_{n}$ from the left and multiplying unit element $V$ of $\mathcal{S}_n$ from the right such that $\pi_{0,n}\left(UV \right)=1$ {\it the $\mathcal{S}_{n}$-coloring invariant} of flat virtual knot $\alpha$, and it is denoted by $F_{\alpha}$. 
\end{defini}

We should show that this is independent of (1) the choice of starting arc with fixed pair $(D,V)$ (Claim~\ref{independence_startingarc}), (2) the choice of the fake vertices $V_{fake}$ with fixed diagram $D$ (Claim~\ref{independence_fake}), and (3) the choice of the diagram $D$ representing $\alpha$ (Claim~\ref{independence_diagram}). 

\begin{rmk} \label{rmk_sqncoloring}
The above construction is interpreted through semiquandle coloring using $\mathcal{S}_n$. 
Fix $(D,V)$ and choose one arc. 
Assign variable $x_i$ to the arc labeled $i$ for each $i$. 
Assign equations for each vertex as in Figure~\ref{sqncoloring_vertex}. 
Order the equations from the $k$-th arc. 
Then the equations are represented as 
$A\begin{pmatrix}
x_{1}\\
\vdots \\
x_{k}
\end{pmatrix}=
\begin{pmatrix}
0\\
\vdots \\
0
\end{pmatrix}$. 
In Claim~\ref{kkquasideterminant}, we see the matrix $A$ is transformed into a form of 
$\begin{pmatrix}
1&0&0&\cdots&0&u_{1}\\
0&1&0&\cdots&0&u_2\\
0&0&1&\cdots&0&u_3\\
\vdots& \vdots &\vdots  &\ddots&\vdots & \vdots \\
0&0&0&\cdots&1&u_{k-1}\\
0&0&0&\cdots &0&F
\end{pmatrix}$ 
without summing the $k$-th row to another row and without multiplying the $k$-th row by a unit element from left. 
This can be interpreted as follows: 
Removing a point in the $k$-th arc, cut the arc into two arcs, called {\it the starting arc} and {\it the terminal arc}. 
We assign the starting arc the variable $x_0$ and continue to assign the terminal arc the variable $x_k$. 
Then the equation of the cut diagram is represented as 
$\overline{A}\begin{pmatrix}
x_{1}\\
\vdots \\
x_{k-1}\\
x_0\\
x_{k}
\end{pmatrix}=
\begin{pmatrix}
0\\
\vdots \\
0\\
0\\0
\end{pmatrix}$, where $\overline{A}$ is a $k\times(k+1)$-matrix obtained from $A$ by adding the zero-column vector with $k$ entries to the right of $A$, and then exchange the $(k,k)$-entry and the $(k,k+1)$-entry. 
By following the transformation of $A$, this equations are transformed to 
$\begin{pmatrix}
1&0&0&\cdots&0&u_{1}&0\\
0&1&0&\cdots&0&u_2&0\\
0&0&1&\cdots&0&u_3&0\\
\vdots& \vdots &\vdots  &\ddots&\vdots & \vdots &\vdots \\
0&0&0&\cdots&1&u_{k-1}&0\\
0&0&0&\cdots &0&-1+F&1
\end{pmatrix}
\begin{pmatrix}
x_{1}\\
\vdots \\
x_{k-1}\\
x_0\\
x_{k}
\end{pmatrix}=
\begin{pmatrix}
0\\
\vdots \\
0\\
0\\0
\end{pmatrix}$. 
This means that the coloring of this cut diagram is determined by the coloring of the starting arc. 
Under this view, the element $F$ results from the equation $x_0=x_k$. This equation is necessary for the coloring of the cut diagram to be a coloring of the original diagram. 
\end{rmk}

\begin{figure}[htbp]
 \begin{center}
  \includegraphics[width=70mm]{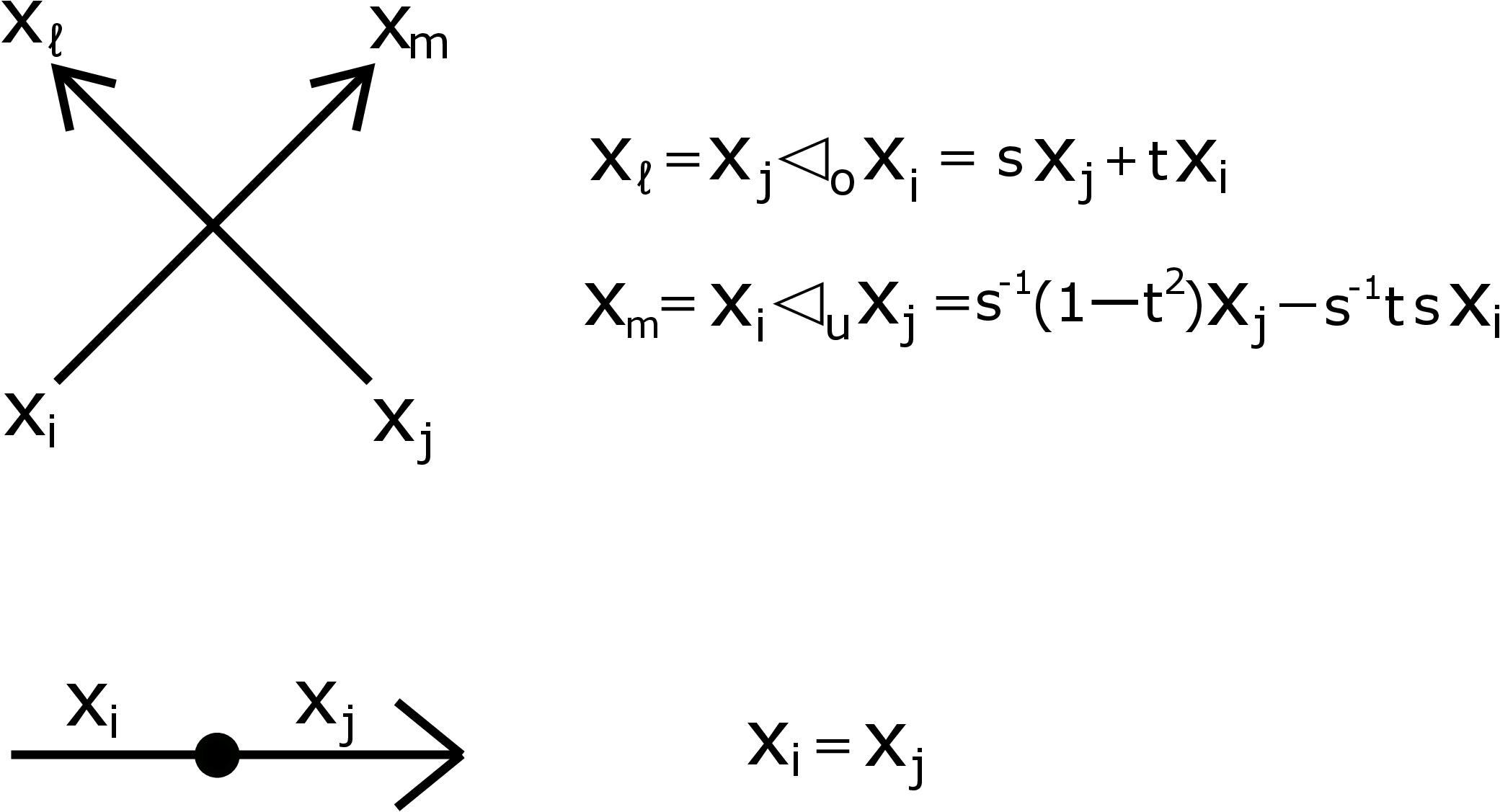}
 \end{center}
 \caption{Equations at flat vertex (top) and fake vertex (bottom) }
 \label{sqncoloring_vertex}
\end{figure}

\begin{claim}\label{independence_startingarc}
For a fixed $(D,V)$, the $\mathcal{S}_{n}$-coloring invariant is independent of the choice of the starting arc. 
\end{claim}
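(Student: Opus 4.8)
The plan is to show that relabeling the arcs cyclically produces a matrix whose $(k,k)$-quasideterminant agrees with the original $F_{\alpha}$ up to the left-right unit ambiguity permitted in Definition~\ref{def_sqncoloring}. Concretely, suppose we shift the starting arc by one position, so that what was the $i$-th arc becomes the $(i-1)$-th arc (indices modulo $k$). Passing from the old labeling to the new one replaces the matrix $A$ by a matrix $\widetilde{A}$ obtained by simultaneously cyclically permuting both the rows and the columns of $A$. Since any cyclic shift is a composition of adjacent transpositions, and since row-permutations and column-permutations are effected by left- and right-multiplication by the matrices $P_{k,k+1}$ of Section~5, I can track the effect on the quasideterminant using Lemma~\ref{permutation} parts (1) and (2).

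The key steps, in order, are as follows. First I would fix the cyclic shift that sends the chosen starting arc to the adjacent one and write down precisely how the row-vector recipe in Figure~\ref{ithrow} transforms: because each row vector records the incoming vertex data relative to arcs $i$ and $i-1$, shifting all labels by one carries the $i$-th row data verbatim to the $(i-1)$-th row while shifting the column positions identically, so $\widetilde{A}$ is exactly $A$ conjugated by a single cyclic permutation matrix $C$, i.e. $\widetilde{A}=C^{-1}AC$ where $C$ is a product of the $P_{k,k+1}$'s realizing the cycle. Second, I would apply Lemma~\ref{permutation} repeatedly to conclude that the $(k,k)$-quasideterminant is carried to a quasideterminant of $\widetilde{A}$ at a \emph{permuted} index $(i_0,j_0)$ determined by where the old $k$-th row and column land under the shift. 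Third, and this is the substantive point, I must reconcile the resulting quasideterminant at the shifted diagonal index with the canonically-defined $(k,k)$-quasideterminant of $\widetilde{A}$: by the construction, both arise from the same system of coloring equations of Remark~\ref{rmk_sqncoloring}, just cut at a different arc, so the two elements of $\mathcal{S}_n$ differ only by left and right multiplication by units whose product projects to $1$ under $\pi_{0,n}$.

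I expect the main obstacle to be the third step, namely matching the index bookkeeping after the cyclic shift with the fixed $(k,k)$-entry demanded by Definition~\ref{def_sqncoloring}. Lemma~\ref{permutation} as stated handles a single adjacent swap $P_{k,k+1}$ and tells us the quasideterminant index moves accordingly, but a full cyclic shift moves the designated diagonal position away from $(k,k)$, and I must argue that re-choosing the designated position back to $(k,k)$ does not change the answer beyond the allowed unit ambiguity. The cleanest route is the interpretation in Remark~\ref{rmk_sqncoloring}: the quasideterminant $F$ is exactly the obstruction $x_0=x_k$ for the coloring of the cut diagram to close up, and this closing-up condition is intrinsic to $(D,V)$ and does not depend on which arc we cut at. I would therefore argue that cutting at a different arc yields a system whose closing obstruction differs from the original only by the invertible transition maps along the arcs between the two cut points, and these transition maps are precisely unit elements $U,V$ of $\mathcal{S}_n$ with $\pi_{0,n}(UV)=1$ (each row operation used a unit that is congruent to $\pm s^{\pm 1}$ modulo degree-positive terms, and these cancel around the closed diagram). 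This reduces the independence claim to verifying that the accumulated transition unit satisfies the $\pi_{0,n}$-normalization, which follows because the total winding contribution around the knot is trivial, exactly as the $(k,k)$-entry of $A'$ vanished in Claim~\ref{kkquasideterminant}.
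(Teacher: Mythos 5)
Your first two steps coincide with the paper's own argument: relabeling the starting arc permutes the rows and columns of $A$, and Lemma~\ref{permutation} converts the $(k,k)$-quasideterminant of the relabeled matrix into a quasideterminant of the \emph{original} matrix at a shifted diagonal position, so (treating a shift by one, which suffices by iteration) everything reduces to comparing $|A|_{k,k}$ with $|A|_{k-1,k-1}$. The gap is exactly where you predicted it: your third step asserts, rather than proves, that these two elements differ by units $U,V$ with $\pi_{0,n}(UV)=1$. The appeal to Remark~\ref{rmk_sqncoloring} (``the closing-up obstruction is intrinsic, and the transition maps along the arcs are units that cancel'') does not work as stated, for two reasons. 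First, the coloring relation at a flat crossing couples each arc to the transversal arc, so there is no chain of invertible transition maps along the knot to compose; the statement that each arc's color is a unit multiple of the starting color is itself an output of the Gaussian elimination in Claim~\ref{kkquasideterminant}, and cutting at a different arc means re-solving a genuinely coupled system, not composing local transitions. Second, even granting that the two cut systems impose ``the same'' closing condition (the same solution set), in the noncommutative ring $\mathcal{S}_n$ two elements with the same solution set need not differ by unit factors, and the normalization $\pi_{0,n}(UV)=1$ certainly does not follow from this; ``the total winding contribution is trivial'' gestures at the right fact but is not an argument.

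What the paper does instead, and what your outline is missing, is an explicit computation closing exactly this gap: re-run the elimination of Claim~\ref{kkquasideterminant} while leaving \emph{both} the $(k-1)$-th and the $k$-th rows untouched, arriving at a matrix $\tilde A$ whose bottom-right $2\times 2$ block is $\begin{pmatrix} a & b\\ c & d\end{pmatrix}$; Lemma~\ref{rowtransformation} gives $|\tilde A|_{k,k}=|A|_{k,k}$ and $|\tilde A|_{k-1,k-1}=|A|_{k-1,k-1}$. One then checks $\pi_{0,n}(a)=\pi_{0,n}(d)=1$, uses the vanishing of the ordinary determinant of $\pi_{0,n}(A)$ to get $\pi_{0,n}(a)\pi_{0,n}(d)=\pi_{0,n}(b)\pi_{0,n}(c)$, hence $\pi_{0,n}(b)=s^{l}$ and $\pi_{0,n}(c)=s^{-l}$, so all four entries are units by Lemma~\ref{unit}; and computes $|A|_{k,k}=d-ca^{-1}b$ and $|A|_{k-1,k-1}=a-bd^{-1}c$. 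A short noncommutative manipulation, expanding $bd^{-1}c$ via a geometric series (valid because $c^{-1}|A|_{k,k}b^{-1}a$ has positive degree, hence is nilpotent), yields $|A|_{k-1,k-1}=\left(ac^{-1}\right)|A|_{k,k}\left(b^{-1}a\sum_{i=0}^{n}(-1)^{i}\left(c^{-1}|A|_{k,k}b^{-1}a\right)^{i}\right)$, which exhibits explicit units whose $\pi_{0,n}$-images multiply to $1$. Supplying this computation (or an equivalent one) is what your third step needs to become a proof.
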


\begin{proof}
Choose a starting arc and give the arcs labels $1,\dots,k$ from it. 
Let $A$ be the $k$-square matrix constructed by the above operation. 
Moreover, let $B$ be the $k$-square matrix constructed by choosing the $(k-1)$- labeled arc as the starting arc. 
We will show $|A|_{k,k}$ is same as $|B|_{k,k}$ modulo multiplying unit elements $U$ and $V$ of $\mathcal{S}_n$ with $\pi_{0,n}(UV)=1$ from left and right, respectively. 
Note that $B$ is transformed into $A$ by s sequence of operations: Exchanging the first row and the second row, exchanging the second row and the third row,..., exchanging the $(k-1)$-th row and the $k$-th row, and then exchanging the first column and the second column, exchanging the second column and the third column,..., exchanging the $(k-1)$-th column and the $k$-th column. 
Thus by Lemma~\ref{permutation}, we see that $|B|_{k,k}=|A|_{k-1,k-1}$. 
Following the transformation of $A$ in Claim~\ref{kkquasideterminant}, being careful not to add the $(k-1)$-th row to another rows and not to multiple $(k-1)$-th row by a unit element from left, we get a matrix 
$\tilde{A}=\begin{pmatrix}
1&0&\cdots&0&*&*\\
0&1&\cdots&0&*&*\\
\vdots& \vdots  &\ddots&\vdots&\vdots & \vdots \\
0&0&\cdots&1&*&*\\
0&0&\cdots&0&a&b\\
0&0&\cdots&0 &c&d
\end{pmatrix}$ for some elements $a,b,c,d$ of $\mathcal{S}_n$. 
Note that we see that $\pi_{0,n}(a)=\pi_{0,n}(d)=1$ by following the deletion of $\pi_{0,n}\left( A \right)$. 
Since neither the $(k-1)$-th row nor the $k$-th row is added to another row during this transformation, we see that $|\tilde{A}|_{k,k}=|A|_{k,k}$ and $|\tilde{A}|_{k-1,k-1}=|A|_{k-1,k-1}$ by Lemma~\ref{rowtransformation}. 
%Since $|A|_{k,k}$ and $|B|_{k,k}=|A_{k-1,k-1}|$ can be defined by Claim~\ref{kkquasideterminant}, we see that $|\overline{A}|_{k,k}$ and $|\overline{A}|_{k-1,k-1}$ can be defined (i.e. $\overline{A}^{k,k}$ and $\overline{A}^{k-1,k-1}$ are invertible) by Lemma~\ref{rowtransformation} again. 
By Lemma~\ref{unit}, we see that $a$ and $d$ are unit elements of $\mathcal{S}_n$. 
Moreover, we have $\pi_{0,n}(a) \pi_{0,n}(d)=\pi_{0,n}(b)\pi_{0,n}(c)$ since the ordinary determinant of $\pi_{0,n}\left( A\right)$ (and thus that of $\pi_{0,n}\left( \tilde{A} \right)$) are $0$. 
This implies that $\pi_{0,n}(b)=s^{l}$, $\pi_{0,n}(c)=s^{-l}$ for some integer $l$ and that $b$ and $c$ are unit elements of $\mathcal{S}_n$ by Lemma~\ref{unit}. 
By using the $(k-1)$-th row, we can delete the $(k,k-1)$-entry of $\tilde{A}$. 
Since the $(k,k)$-entry of the resultant matrix is $|A|_{k,k}$, we have $d-ca^{-1}b=|A|_{k,k}$.  
By this we have $c^{-1}db^{-1}=a^{-1}+c^{-1}|A|_{k,k}b^{-1}=\left( 1+c^{-1}|A|_{k,k}b^{-1}a\right)a^{-1}$. 
Thus we have $bd^{-1}c=\left( \left( 1+c^{-1}|A|_{k,k}b^{-1}a\right)a^{-1}\right)^{-1}=a{\displaystyle \sum^{n}_{i=0} (-1)^{i} \left(c^{-1}|A|_{k,k}b^{-1}a \right)^{i}}$. 
Note that $c^{-1}|A|_{k,k}b^{-1}a $ is $0$ or a some sum of monomials each of whose degree is grater than $0$. 
By computation, we get 
\begin{align}
|A|_{k-1,k-1}&=|\tilde{A}|_{k-1,k-1}\notag \\
&=a-
\begin{pmatrix}
0&\cdots&0&b
\end{pmatrix}
{\begin{pmatrix}
1&0&\cdots&0&*\\
0&1&\cdots&0&*\\
\vdots& \vdots  &\ddots&\vdots& \vdots \\
0&0&\cdots&1&*\\
0&0&\cdots&0 &d
\end{pmatrix}}^{-1}
\begin{pmatrix}
*\\
 \vdots \\
*\\
c
\end{pmatrix}=a-bd^{-1}c=a{\displaystyle \sum^{n}_{i=1} (-1)^{i+1} \left(c^{-1}|A|_{k,k}b^{-1}a \right)^{i}}\notag \\
&=a\left(c^{-1}|A|_{k,k}b^{-1}a \right){\displaystyle \sum^{n}_{i=0} (-1)^{i} \left(c^{-1}|A|_{k,k}b^{-1}a \right)^{i}}=\left(ac^{-1}\right)|A|_{k,k}\left(b^{-1}a{\displaystyle \sum^{n}_{i=0} (-1)^{i} \left(c^{-1}|A|_{k,k}b^{-1}a \right)^{i}}\right)
. \notag
\end{align} 
By applying $\pi_{0,n}$ we see that $\left(ac^{-1}\right)$ and $\left(b^{-1}a{\displaystyle \sum^{n}_{i=0} (-1)^{i} \left(c^{-1}|A|_{k,k}b^{-1}a \right)^{i}}\right)$ are unit elements of $\mathcal{S}_{n}$ by Lemma~\ref{unit}. 
Moreover, the product of the images of these elements under the projection $\pi_{0,n}$ is $1$. 
\end{proof}

\begin{claim}\label{independence_fake}
For a fixed diagram $D$ of a flat virtual knot $\alpha$, the $\mathcal{S}_{n}$-coloring invariant $F_{\alpha}$ is independent of choices of fake vertices $V_{fake}\subset V$. 
\end{claim}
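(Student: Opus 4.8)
The plan is to reduce the statement to the effect of inserting a single fake vertex, and then to show that such an insertion alters neither the matrix data nor the relevant quasideterminant at all. Given two choices $V_{fake}$ and $V_{fake}'$, their union $V_{fake}\cup V_{fake}'$ is a common refinement reached from each by adding finitely many fake vertices one at a time; since $F_{\alpha}$ is already known to be well defined, it suffices to prove that adding one fake vertex to a fixed $(D,V)$ leaves $F_{\alpha}$ unchanged. The single small case $k=1$ (a crossingless diagram carrying one fake vertex) I would dispose of by hand: there $A=(0)$ and $F_{\alpha}=0$, and adding one fake vertex yields the $2$-square matrix $\begin{pmatrix}1&-1\\-1&1\end{pmatrix}$, whose $(2,2)$-quasideterminant is $1-(-1)(1)^{-1}(-1)=0$, so $F_{\alpha}$ is still $0$.

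So fix $(D,V)$ with $k\geq 2$ arcs and matrix $A$, and let the new fake vertex lie in the interior of one arc $a$, splitting it into a first piece $a'$ and a second piece $a''$. By Claim~\ref{independence_startingarc} I may compute $F_{\alpha}$ for $(D,V)$ using $a$ as the starting arc, and compute $F_{\alpha}$ for the enlarged datum using $a'$ as the starting arc. With these choices the enlarged matrix $A'$ (of size $k+1$) is obtained from $A$ by leaving the last arc, hence the corner $(k+1,k+1)$, untouched; by giving $a',a''$ the labels $1,2$; and by inserting as the second row the fake-vertex relation, whose only nonzero entries are $1$ in column $2$ and $-1$ in column $1$ (the right-hand case of Figure~\ref{ithrow}). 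Every column of $A'$ other than column $2$ agrees with the corresponding column of $A$ under the index shift, and column $2$, the variable of $a''$, is nonzero only in the fake row and in the rows indexed by the arcs whose tail lies at the head of $a''$.

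Next I would clear column $2$. For each row $i\neq 2$ carrying a nonzero entry $\mu$ in column $2$, subtract $\mu$ times the fake row from row $i$; by Lemma~\ref{rowtransformation}, applied with $l=2$ and with target index $k+1\neq 2$, this does not change $|A'|_{k+1,k+1}$. After these steps column $2$ becomes the standard vector $e_2$, while the coefficients that previously coupled to $a''$ have been folded into column $1$, the variable of $a'$. Because the fake relation is exactly $x_{a''}=x_{a'}$, this folding reconstitutes precisely the references to the unsplit arc $a$, so that the restriction of the reduced matrix to all indices other than $2$ is exactly $A$ under the index correspondence.

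Finally I would evaluate the quasideterminant. Write $S$ for the $(k+1,k+1)$-minor of the reduced $A'$. Since column $2$ of $S$ is $e_2$ and its row $2$ is $e_2^{T}-e_1^{T}$, the rows $i\neq 2$ of the system $Sw=v$ involve only the unknowns $w_j$ with $j\neq 2$ and have coefficient matrix $A^{k,k}$, whence the block of $S^{-1}$ on the indices different from $2$ equals $(A^{k,k})^{-1}$. As row $k+1$ and column $k+1$ of the reduced $A'$ both vanish in position $2$, the product ${r_{k+1}}^{k+1}(A')\cdot S^{-1}\cdot {c^{k+1}}_{k+1}(A')$ sees only that block, and one obtains $|A'|_{k+1,k+1}=|A|_{k,k}$ on the nose; in particular $F_{\alpha}$ is unchanged. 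I expect the main obstacle to be the geometric bookkeeping in the third paragraph: one must verify in all local configurations, including a kink where the split arc returns to the same flat crossing and the presence of two outgoing strands at the head of $a''$, that column $2$ is supported only on the stated rows and that the folding genuinely reproduces $A$, which invokes the $j=i$ and $j=i-1$ cases of the row construction and Lemma~\ref{unit}. The block-inversion identity for $S^{-1}$ is then the only purely computational point, and it is clean precisely because column $2$ of $S$ is $e_2$.
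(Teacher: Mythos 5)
Your proposal is correct and is essentially the paper's own argument: both reduce to inserting one fake vertex, invoke Claim~\ref{independence_startingarc} to position the starting arc so that the fake-vertex relation is not the distinguished last equation, and then eliminate the new variable by substituting the relation $x_{a''}=x_{a'}$, which is precisely what your clearing of column $2$ via Lemma~\ref{rowtransformation} accomplishes. The paper presents this substitution informally at the level of the coloring equation system (Remark~\ref{rmk_sqncoloring}, Figure~\ref{indep_fake}), while you carry out the same elimination explicitly in matrix form, adding the $k=1$ base case and the block-inversion verification that $|A'|_{k+1,k+1}=|A|_{k,k}$; this is a more detailed rendering of the identical idea rather than a different route.
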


\begin{proof}
We consider $F_{\alpha}$ through a system of equations as in Remark~\ref{rmk_sqncoloring}. 
Take fake vertices $V_{fake}$ for a diagram $D$. 
Consider new fake vertices $V'_{fake}=V_{fake}\cup \{v\}$, obtained by adding new fake vertex $v$ to $V_{fake}$. 
Set $V'$ to be $V_{flat}\cup V'_{fake}$
Take a point $p$ on $D$ which is not a point of $V'_{fake}$. 
On both of $(D,V)$ and $(D,V')$, choose the arcs containing the point $p$ as a starting arc. 
Then after substituting, the system of equations on $(D,V')$ is obtained from that on $(D,V)$ by adding new variable with an equation explaining it. See Figure~\ref{indep_fake}. 
In order to substitute, we change the starting arc so that the last equation does not come from the added fake vertex $v$. 
This does not affect the computation of $F_{\alpha}$ by Claim~\ref{independence_startingarc}. 
\end{proof}

\begin{figure}[htbp]
 \begin{center}
  \includegraphics[width=100mm]{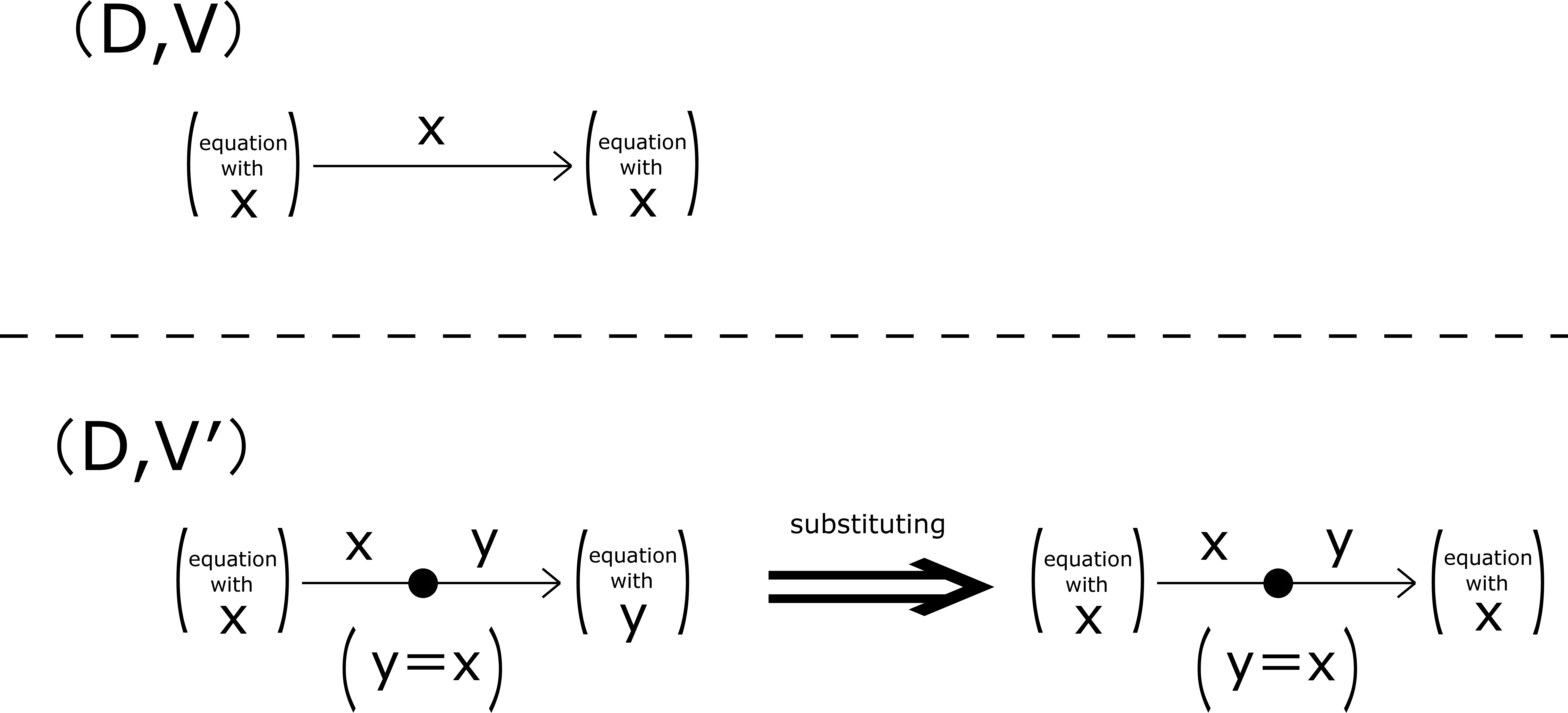}
 \end{center}
 \caption{Top: A system of equations on $(D,V)$ \ \ \ \ \ Bottom: A system of equations on $(D,V')$ }
 \label{indep_fake}
\end{figure}

\begin{claim}\label{independence_diagram}
The $\mathcal{S}_{n}$-coloring invariant $F_{\alpha}$ for a flat virtual knot $\alpha$ is independent of flat virtual knot diagrams representing $\alpha$. 
\end{claim}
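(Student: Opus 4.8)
The plan is to reduce the statement to invariance under a short list of local moves and then read off $F_\alpha$ as a holonomy defect, exploiting the coloring interpretation of Remark~\ref{rmk_sqncoloring}. Two diagrams of $\alpha$ are related by a finite sequence of the moves FR1, FR2, FR3, FR4 and VR1, VR2, VR3 of Figure~\ref{flat_reidemeister}. Since virtual crossings carry no vertex of $V$ and impose no coloring relation, the moves VR1, VR2, VR3 and FR4 change neither the set of arcs nor the system of coloring equations, so they leave $|A|_{k,k}$ literally unchanged. By the Fact that FR1, FR2 and FR3 are generated by the five moves $\mathrm{FRa}^{r}$, $\mathrm{FRa}^{l}$, $\mathrm{FRb}^{+}$, $\mathrm{FRb}^{-}$ and $\mathrm{FRc}$ of Figure~\ref{fivemoves}, it then suffices to check invariance of $F_\alpha$ under these five moves.

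Each of the five moves is supported in a disk $\Delta$ outside of which the two diagrams coincide. First I would normalize the configuration: using Claim~\ref{independence_fake} I add fake vertices so that both diagrams have identical arc structure outside $\Delta$, and using Claim~\ref{independence_startingarc} I choose the starting arc, equivalently the cut point of Remark~\ref{rmk_sqncoloring}, to lie on an arc disjoint from $\Delta$. With this choice $F_\alpha$ is read off as a holonomy defect: fixing the color $x_0$ of the starting arc, the relation at each flat vertex is one of the invertible affine maps $\cdot\,\triangleleft_{o}y$, $\cdot\,\triangleleft_{u}y$ of Definition~\ref{def_semiquandle}, so every arc color is a left $\mathcal{S}_{n}$-multiple of $x_0$ and the terminal arc receives $x_k=(1-F)x_0$.

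The core step is local. Inside $\Delta$ I would solve out the arcs internal to $\Delta$ by the row operations of Lemma~\ref{rowtransformation} and Lemma~\ref{divide}, the diagonal entries being units by Lemma~\ref{unit} exactly as in Claim~\ref{kkquasideterminant}, obtaining a reduced relation that expresses the colors of the arcs leaving $\Delta$ in terms of those entering $\Delta$. The point to verify is that this reduced input–output relation is the same affine map on the two sides of each move. This is precisely the algebraic content of the semiquandle axioms already recorded in Remark~\ref{invariance_coloring}: axioms (0) and (1) give it for $\mathrm{FRa}^{r}$ and $\mathrm{FRa}^{l}$ (the kink arc is forced uniquely and the transfer is the identity), axiom (2) gives it for $\mathrm{FRb}^{+}$ and $\mathrm{FRb}^{-}$, and axiom (3) gives it for $\mathrm{FRc}$. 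Because the transfer across $\Delta$ agrees, propagation continues identically outside $\Delta$, so the total holonomy $1-F$ around the knot agrees after the reductions.

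Finally I would account for the permitted ambiguity of Definition~\ref{def_sqncoloring}. The moves $\mathrm{FRa}$ and $\mathrm{FRb}$ change the number $k$ of arcs, so the matrices $A$ have different sizes, and solving out the internal arcs alters $|A|_{k,k}$ only by a left unit $U$ and a right unit $V$. I expect the main obstacle to be exactly this bookkeeping: verifying that the accumulated units satisfy $\pi_{0,n}(UV)=1$, so that the two values of $|A|_{k,k}$ represent the same class $F_\alpha$. This should be controlled by tracking each move at the level $t=0$, where $\pi_{0,n}$ of every diagonal unit is a power of $s$ and the writhe-type consistency established in Claim~\ref{kkquasideterminant} forces the product of the two units to project to $1$.
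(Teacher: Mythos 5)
Your proposal follows essentially the same route as the paper: reduce to the five moves of Figure~\ref{fivemoves} (the virtual moves being irrelevant since virtual crossings impose no coloring relations), fix the fake vertices and the starting arc outside the disk of the move via Claims~\ref{independence_fake} and~\ref{independence_startingarc}, and conclude from Remark~\ref{invariance_coloring} that the input--output coloring transfer across the disk, and hence $F_{\alpha}$, is unchanged. Your final worry about bookkeeping units $U,V$ with $\pi_{0,n}(UV)=1$ is superfluous: solving out the internal arcs by Lemmas~\ref{rowtransformation} and~\ref{divide} does not alter the quasideterminant at all (those rows are never the $k$-th row), so once the starting arc is common and the transfer maps agree, the consistency equation $x_{0}=x_{k}$ yields literally the same element $F$, with no unit ambiguity to control.
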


\begin{proof}
Let $D$ and $D'$ be two flat virtual knot diagrams representing $\alpha$ such that they are related by one move in Figure~\ref{fivemoves}. 
Take same fake vertices for $D$ and $D'$ outside of the areas depicted in Figure~\ref{fivemoves} so that the endarcs of each figure in Figure~\ref{fivemoves} are distinct. 
Take same starting arcs for $D$ and $D'$ outside of the areas depicted in Figure~\ref{fivemoves}. 
Then the fact that multiple element at the output coloring with respect to the input coloring does not change follows from Remark~\ref{invariance_coloring}. 
\end{proof}

\begin{example}
We compute (one of representatives of) the $\mathcal{S}_n$-coloring invariant of $3.1$. 
We choose a diagram of $3.1$ and an arc of it, and assign variables as in Figure~\ref{example_computation}. 

\begin{figure}[htbp]
 \begin{center}
  \includegraphics[width=30mm]{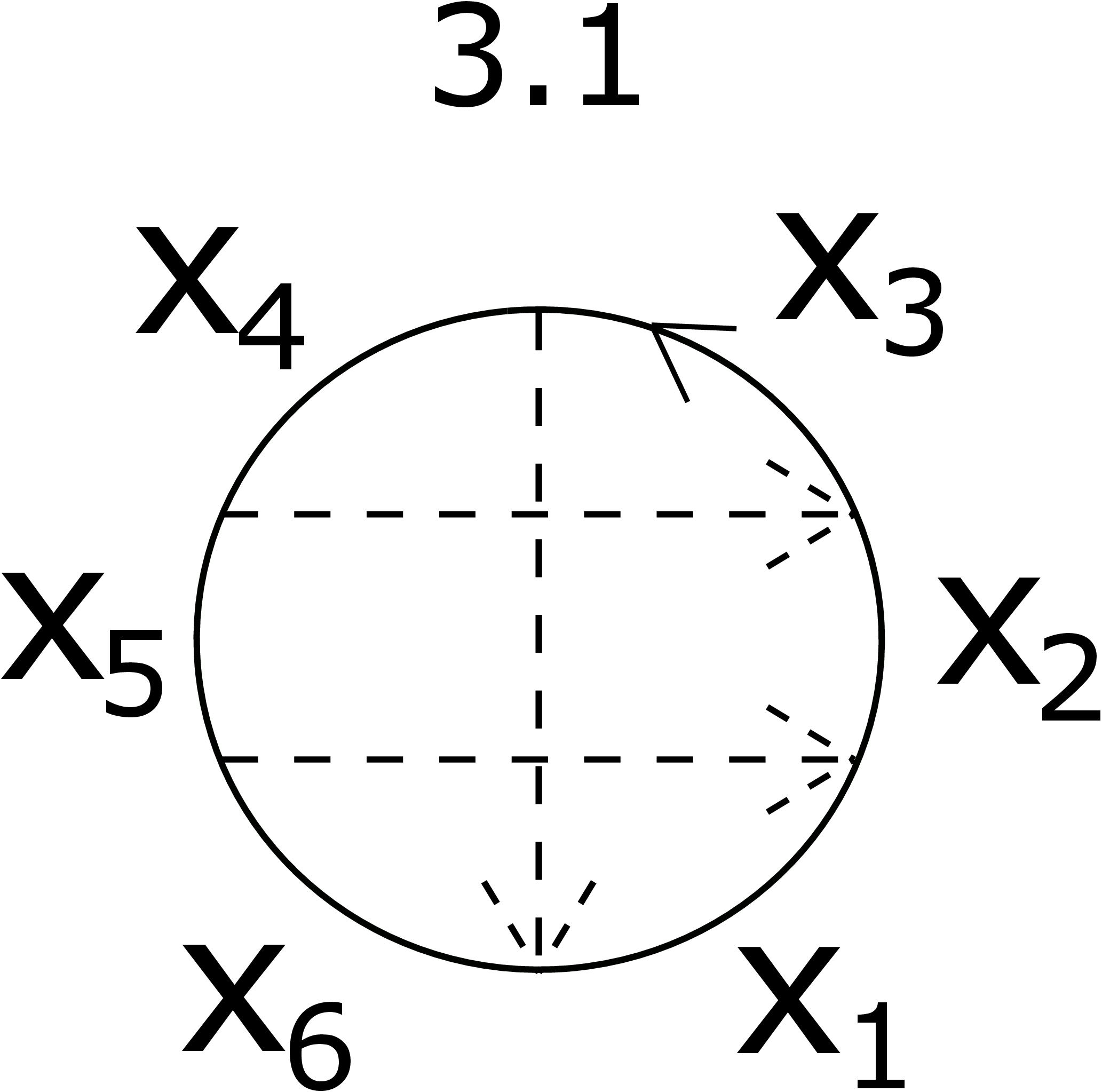}
 \end{center}
 \caption{A Gauss diagram of $3.1$ and an assignment of variables.}
 \label{example_computation}
\end{figure}

Then the matrix is transformed as follows:
\begin{align}
&\begin{pmatrix}
1&0&-t&0&0&-s \\
-s&1&0&0&-t&0 \\
0&-s&1&-t&0&0 \\
0&0&-s^{-1}(1-t^2)&1&0&s^{-1}ts \\
0&s^{-1}ts&0&-s^{-1}(1-t^2)&1&0 \\
s^{-1}ts&0&0&0&-s^{-1}(1-t^2)&1
\end{pmatrix} \notag \\
& \ \downarrow \notag \\
&
\begin{pmatrix}
1&0&-t&0&0&-s \\
0&1&-st&0&-t&-s^{2} \\
0&-s&1&-t&0&0 \\
0&0&-s^{-1}(1-t^2)&1&0&s^{-1}ts \\
0&s^{-1}ts&0&-s^{-1}(1-t^2)&1&0 \\
0&0&s^{-1}tst&0&-s^{-1}(1-t^2)&1+s^{-1}ts^2
\end{pmatrix} \notag  \\
& \ \downarrow \notag \\
&
\begin{pmatrix}
1&0&-t&0&0&-s \\
0&1&-st&0&-t&-s^{2} \\
0&0&1-s^{2}t&-t&-st&-s^3 \\
0&0&-s^{-1}(1-t^2)&1&0&s^{-1}ts \\
0&0&s^{-1}ts^{2}t&-s^{-1}(1-t^2)&1+s^{-1}tst&s^{-1}ts^3 \\
0&0&s^{-1}tst&0&-s^{-1}(1-t^2)&1+s^{-1}ts^2
\end{pmatrix} \notag \\
& \ \downarrow \notag \\
&
\begin{pmatrix}
1&0&-t&0&0&-s \\
0&1&-st&0&-t&-s^{2} \\
0&0&1-s^{2}t&-t&-st&-s^3 \\
0&0&0&1-s^{-1}(1-t^2)(1-s^{2}t)^{-1}t&-s^{-1}(1-t^2)(1-s^{2}t)^{-1}st&s^{-1}\left( t-(1-t^2)(1-s^{2}t)^{-1}s^2\right)s \\
0&0&0&s^{-1}\left(ts^{2}t (1-s^{2}t)^{-1}t-(1-t^2)\right)&1+s^{-1}t \left(1+s^{2}t(1-s^{2}t)^{-1} \right)st&s^{-1}t\left(1+ s^{2}t(1-s^{2}t)^{-1} \right)s^3 \\
0&0&0&s^{-1}tst(1-s^{2}t)^{-1}t&s^{-1}\left( tst(1-s^{2}t)^{-1}st-(1-t^2)\right)&1+s^{-1}ts\left( 1+t(1-s^{2}t)^{-1}s^{2}\right)s
\end{pmatrix} \notag \\
& \ \downarrow \notag \\
&
\begin{pmatrix}
1&0&-t&0&0&-s \\
0&1&-st&0&-t&-s^{2} \\
0&0&1-s^{2}t&-t&-st&-s^3 \\
0&0&0&a&b&c \\
0&0&0&d&e&f \\
0&0&0&g&h&i
\end{pmatrix} \notag \\
& \ \downarrow \notag \\
&
\begin{pmatrix}
1&0&-t&0&0&-s \\
0&1&-st&0&-t&-s^{2} \\
0&0&1-s^{2}t&-t&-st&-s^3 \\
0&0&0&a&b&c \\
0&0&0&0&e-da^{-1}b&f-da^{-1}c \\
0&0&0&0&h-ga^{-1}b&i-ga^{-1}c
\end{pmatrix} \notag \\
& \ \downarrow \notag \\
&
\begin{pmatrix}
1&0&-t&0&0&-s \\
0&1&-st&0&-t&-s^{2} \\
0&0&1-s^{2}t&-t&-st&-s^3 \\
0&0&0&a&b&c \\
0&0&0&0&e-da^{-1}b&f-da^{-1}c \\
0&0&0&0&0&i-ga^{-1}c-(h-ga^{-1}b)(e-da^{-1}b)^{-1}(f-da^{-1}c)
\end{pmatrix} \notag 
\end{align}

Thus we compute $F_{3.1}=i-ga^{-1}c-(h-ga^{-1}b)(e-da^{-1}b)^{-1}(f-da^{-1}c)$, where $a=1-s^{-1}(1-t^2)(1-s^{2}t)^{-1}t$, $b=-s^{-1}(1-t^2)(1-s^{2}t)^{-1}st$, $c=s^{-1}\left( t-(1-t^2)(1-s^{2}t)^{-1}s^2\right)s$, $d=s^{-1}\left(ts^{2}t (1-s^{2}t)^{-1}t-(1-t^2)\right)$, $e=1+s^{-1}t \left(1+s^{2}t(1-s^{2}t)^{-1} \right)st$, $f=s^{-1}t\left(1+ s^{2}t(1-s^{2}t)^{-1} \right)s^3$, $g=s^{-1}tst(1-s^{2}t)^{-1}t$, $h=s^{-1}\left( tst(1-s^{2}t)^{-1}st-(1-t^2)\right)$ and $i=1+s^{-1}ts\left( 1+t(1-s^{2}t)^{-1}s^{2}\right)s$.

\end{example}

\subsection{Some properties}
In this subsection, we collect some properties for the $\mathcal{S}_{n}$-coloring invariant for a flat virtual knot. 

\begin{defini}
For a flat virtual knot $\alpha$, take some representative $F_{\alpha}\in \mathcal{S}_{n}$ of the $\mathcal{S}_n$-coloring invariant of $\alpha$. 
Using the basis as in Proposition~\ref{basis}, this $F_{\alpha}$ is uniquely represented as $F_{\alpha}={\displaystyle \sum^{n}_{i=1} f_{\alpha,i}(s) t^{i}}$ for some Laurent polynomials $f_{\alpha,i}$ for $i=1,\dots ,n$. 
After taking identification caused by taking modulo on $F$, we call the image of $f_{\alpha,i}(s)$ {\it the $i$-th $\mathcal{S}_{n}$-coloring polynomial} of $\alpha$.   
\end{defini} 

\subsection*{Observations}
\begin{itemize}
\item Let $\alpha$ be a flat virtual knot. Let $n$ and $m$ be non-negative integers with $n\leq m$. Let $F$ and $G$ be the $\mathcal{S}_{n}$-coloring invariant and $\mathcal{S}_{m}$-coloring invariant of $\alpha$. 
Then $F=\pi_{m,n}\left( G\right)$ holds since semiquandle operations, thus constructions of equations are compatible with the projection $\pi_{m,n}$. 
\item Let ${\displaystyle \sum^{n}_{i=1} f_{\alpha,i}(s) t^{i}}$ be a representative of the $\mathcal{S}_{n}$-coloring invariant of a flat virtual knot $\alpha$. 
Then the first $\mathcal{S}_n$-coloring polynomial is defined in $\mathbb{Z}[s^{\pm1}]$. 
Moreover, for $i>1$, suppose that the $f_{\alpha,j}$ is $0$ for all $j<i$. Then the $i$-th $\mathcal{S}_{n}$-coloring polynomial is defined in $\mathbb{Z}[s^{\pm1}]$. 
This is because the ambiguity for defining the $\mathcal{S}_{n}$-coloring invariant is multiplying a unit element $U$ from left and multiplying a unit element $V$ from right such that $\pi_{0,n}(UV)=1$ as stated in Definition~\ref{def_sqncoloring}. 
\item By the above, for computing the $i$-th $\mathcal{S}_n$-coloring polynomial, we can use any non-negative integer $m$ with $i\leq m$ and can compute the $i$-th $\mathcal{S}_{m}$-coloring polynomial instead of computing the $i$-th $\mathcal{S}_n$-coloring polynomial. 
%Thus we will say the $i$-th coloring polynomial for the $i$-th $\mathcal{S}_n$-coloring polynomial. 
\end{itemize}

In general, it is complicated to compute the $i$-th $\mathcal{S}_{n}$-coloring polynomial. 
However, the first $\mathcal{S}_{n}$-coloring polynomial relates to the u-polynomial, defined in \ref{u-polynomial} in Section~\ref{sec_flatvirtuallinks}. 

\begin{prop}
Let $\alpha$ be a flat virtual knot. Then $f_{\alpha,1}(s)=u_{\alpha}(s)-u_{\alpha}(s^{-1})$, where $f_{\alpha,1}(s)$ is the first $\mathcal{S}_{n}-$coloring polynomial of $\alpha$ and $u_{\alpha}(s)$ is the u-polynomial of $\alpha$. 
\end{prop}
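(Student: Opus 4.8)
The plan is to use the observation, recorded just above, that the first $\mathcal{S}_{n}$-coloring polynomial may be computed inside $\mathcal{S}_{1}$, where the ring is far simpler. Specializing to $n=1$ we have $t^{2}=0$ together with $ts=st$ and $ts^{-1}=s^{-1}t$, so $\mathcal{S}_{1}$ is commutative and the two operations collapse (using $s^{-1}(1-t^{2})=s^{-1}$ and $s^{-1}ts=t$ in $\mathcal{S}_{1}$) to $x\triangleleft_{o}y=sx+ty$ and $x\triangleleft_{u}y=s^{-1}x-ty$. Since $\pi_{1,n}$ merely discards the terms of $t$-degree $\geq 2$ and, as noted in the observations above, the first coloring polynomial is a genuine element of $\mathbb{Z}[s^{\pm1}]$, it is enough to compute the coefficient of $t$ in the $\mathcal{S}_{1}$-coloring invariant and to identify it with $u_{\alpha}(s)-u_{\alpha}(s^{-1})$.

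Next I would invoke the cut-diagram picture of Remark~\ref{rmk_sqncoloring}: colouring the starting arc by a formal $x_{0}$ and propagating once around the knot expresses every arc colour as $x_{i}=M_{i}x_{0}$ with $M_{i}\in\mathcal{S}_{1}$, and the last row there gives $F=1-M_{k}$. Writing $M_{i}=s^{w_{i}}+c_{i}t$ with $w_{i}\in\mathbb{Z}$ and $c_{i}\in\mathbb{Z}[s^{\pm1}]$, the relation $t^{2}=0$ is the decisive simplification: in the recursion $M_{i}=\sigma_{i}M_{i-1}+\varepsilon_{i}\,t\,M_{j(i)}$ read off the $i$-th row (with $\sigma_{i}\in\{s,s^{-1},1\}$, $\varepsilon_{i}\in\{+1,-1,0\}$, and $j(i)$ the partner arc from Figure~\ref{ithrow}), the term $t\,M_{j(i)}$ reduces to $t\,s^{w_{j(i)}}$, so the $c_{i}$ decouple from the partner's $t$-part and obey the forward recursion $w_{i}=w_{i-1}+\delta_{i}$, $c_{i}=\sigma_{i}c_{i-1}+\varepsilon_{i}s^{w_{j(i)}}$, where $\delta_{i}=+1,-1,0$ according as the vertex is of type $\triangleleft_{o}$, $\triangleleft_{u}$, or fake. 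Because each flat crossing is traversed once as $\triangleleft_{o}$ and once as $\triangleleft_{u}$, one gets $w_{k}=0$, whence $M_{k}=1+c_{k}t$, $F=-c_{k}t$, and the coefficient of $t$ is $-c_{k}$.

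Solving the recursion and telescoping $\prod_{l>i}\sigma_{l}=s^{w_{k}-w_{i}}=s^{-w_{i}}$ gives $c_{k}=\sum_{i}\varepsilon_{i}s^{w_{j(i)}-w_{i}}$, a sum over the flat vertices only. I would then pair up the two vertices of a single crossing $e$: if its $\triangleleft_{o}$-output is the $i_{1}$-th arc and its $\triangleleft_{u}$-output the $i_{2}$-th arc, with the partners being the two incoming arcs, then the contribution of $e$ is $s^{w_{i_2-1}-w_{i_1}}-s^{w_{i_1-1}-w_{i_2}}$; substituting $w_{i_1}=w_{i_1-1}+1$ and $w_{i_2-1}=w_{i_2}+1$ turns this into $s^{\nu_{e}}-s^{-\nu_{e}}$ with $\nu_{e}=w_{i_2}-w_{i_1-1}$. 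Hence the coefficient of $t$ is $-c_{k}=\sum_{e}\bigl(s^{-\nu_{e}}-s^{\nu_{e}}\bigr)$.

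The last and hardest step is to identify $-\nu_{e}=w_{i_1-1}-w_{i_2}$ with $n(e)$. Since $\triangleleft_{o}$ carries the sign $+$ (an arrow tail) and $\triangleleft_{u}$ the sign $-$ (an arrow head), $w_{i_1-1}$ is the partial sign-sum just before the tail of $e$ and $w_{i_2}$ that just after its head; a short computation shows $w_{i_1-1}-w_{i_2}$ equals the sum of the signs on one of the two arcs of the circle cut out by the endpoints of $e$, and one must check that this arc is precisely the right-hand side in the definition of $n(e)$. Pinning down the partner arcs in Figure~\ref{ithrow}, matching $\triangleleft_{o}/\triangleleft_{u}$ to tail/head through Figure~\ref{coloring_crossing}, and confirming the ``right side'' orientation so that $-\nu_{e}=n(e)$ (rather than the opposite sign) is where the real care lies. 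Once this convention check is done, the per-crossing contributions sum to $\sum_{e}(s^{n(e)}-s^{-n(e)})$, which equals $u_{\alpha}(s)-u_{\alpha}(s^{-1})$, giving $f_{\alpha,1}(s)=u_{\alpha}(s)-u_{\alpha}(s^{-1})$.
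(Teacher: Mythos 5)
Your strategy coincides with the paper's: both pass to $\mathcal{S}_{1}$, where $t^{2}=0$ and the ring is commutative, propagate a colour once around the knot, and identify the coefficient of $t$ of the resulting holonomy as a sum over crossings of terms $s^{\pm n(e)}-s^{\mp n(e)}$. Your bookkeeping is correct and, if anything, more explicit than the paper's: the recursion $w_{i}=w_{i-1}+\delta_{i}$, $c_{i}=\sigma_{i}c_{i-1}+\varepsilon_{i}s^{w_{j(i)}}$, the telescoping $c_{k}=\sum_{i}\varepsilon_{i}s^{w_{j(i)}-w_{i}}$, the cancellation $w_{k}=0$, the relation $F=1-M_{k}=-c_{k}t$, and the pairing of the two passages of each crossing into $s^{\nu_{e}}-s^{-\nu_{e}}$ are all sound.

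However, the step you explicitly postpone --- identifying $-\nu_{e}$ with $n(e)$ --- is not a peripheral convention check; it is the entire mathematical content of the paper's proof, and your proposal does not contain it. The paper's argument consists precisely of the assertion (Figure~\ref{firstpoly}) that passing the head of an arrow $e$ multiplies the colour by $(s+s^{1+n(e)}t)$ and passing the tail by $(s^{-1}-s^{-1-n(e)}t)$; once that is granted, the telescoping you perform is one line, so what you have re-derived is the bookkeeping around the paper's key claim rather than the claim itself. Moreover the deferred check cannot be waved through as a mere sign convention, because the sign of $f_{\alpha,1}$ is intrinsic: the allowed ambiguity $F\mapsto UFV$ with $\pi_{0,n}(UV)=1$ forces $U$ and $V$ to have leading terms $\epsilon s^{a}$ and $\epsilon s^{-a}$ with the same $\epsilon$, and a degree-one computation shows the coefficient of $t$ is then unchanged; so a wrong convention genuinely produces $u_{\alpha}(s^{-1})-u_{\alpha}(s)$ instead of the claimed $u_{\alpha}(s)-u_{\alpha}(s^{-1})$, a different polynomial in general. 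Indeed your provisional convention is already in tension with the paper: you attach $\triangleleft_{o}$ to arrow tails and $\triangleleft_{u}$ to heads, whereas the paper's factors above attach the $\triangleleft_{o}$-type factor $sx+ty$ to heads and the $\triangleleft_{u}$-type factor to tails. To close the proof you must actually carry out the geometric matching --- which strand at a flat crossing is the $\triangleleft_{o}$-output (Figure~\ref{coloring_crossing} and Figure~\ref{ithrow}), how that correlates with the Gauss-arrow orientation rule that $\langle v_{1},v_{2}\rangle$ be positively oriented, and which of the two arcs cut out by the endpoints of $e$ is the ``right side'' for a counter-clockwise circle --- and verify that these conventions compound to $-\nu_{e}=n(e)$ exactly, not merely up to sign.
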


\begin{proof}
We use $\mathcal{S}_1$-coloring. Note that $t^{2}=0$ holds in $\mathcal{S}_1$ and thus $\mathcal{S}_1$ is commutative. 
We will work on a Gauss diagram $D$ of $\alpha$. 
If an arc colored by $x$ passes the head of the dashed arrow $e$, then the next arc should be colored by $(s+s^{1+n(e)}t)x$, see the top of Figure~\ref{firstpoly}. 
And if an arc colored by $x$ passes the tail of the dashed arrow $e$, then the next arc should be colored by $(s^{-1}-s^{-1-n(e)}t)x$, see the bottom of Figure~\ref{firstpoly}. 
Take any point on the circle of the Gauss diagram and give the arc in which the point is the variable $x$. 
By traveling  the circle, we have the coloring ${\displaystyle \prod_{e\in arr(D)} \left(s+s^{1+n(e)}t \right) \left( s^{-1}-s^{-1-n(e)}t\right)}={\displaystyle \prod_{e\in arr(D)} \left( 1+\left( s^{n(e)}-s^{-n(e)}\right)t \right) }=1+ t\left( {\displaystyle \sum_{e\in arr(D)} s^{n(e)}-s^{-n(e)} }\right)$. 
By this, we see that $f_{\alpha,1}(s)=\left( {\displaystyle \sum_{e\in arr(D)} s^{n(e)}-s^{-n(e)} }\right)=\left( {\displaystyle \sum_{\substack{e\in arr(D) \\ n(e)=0}}s^{n(e)}-s^{-n(e)} }\right) + \left( {\displaystyle \sum_{\substack{e\in arr(D) \\ n(e)>0}}s^{n(e)}-s^{-n(e)} }\right) + \left( {\displaystyle \sum_{\substack{e\in arr(D) \\ n(e)<0}}s^{n(e)}-s^{-n(e)} }\right) = \left({\displaystyle \sum_{e\in arr(D)}sign(n(e))s^{|n(e)|}}\right) - \left({\displaystyle \sum_{e\in arr(D)}sign(n(e))s^{-|n(e)|}}\right) =u_{\alpha}(s)-u_{\alpha}(s^{-1})$. 
\end{proof}

\begin{figure}[htbp]
 \begin{center}
  \includegraphics[width=80mm]{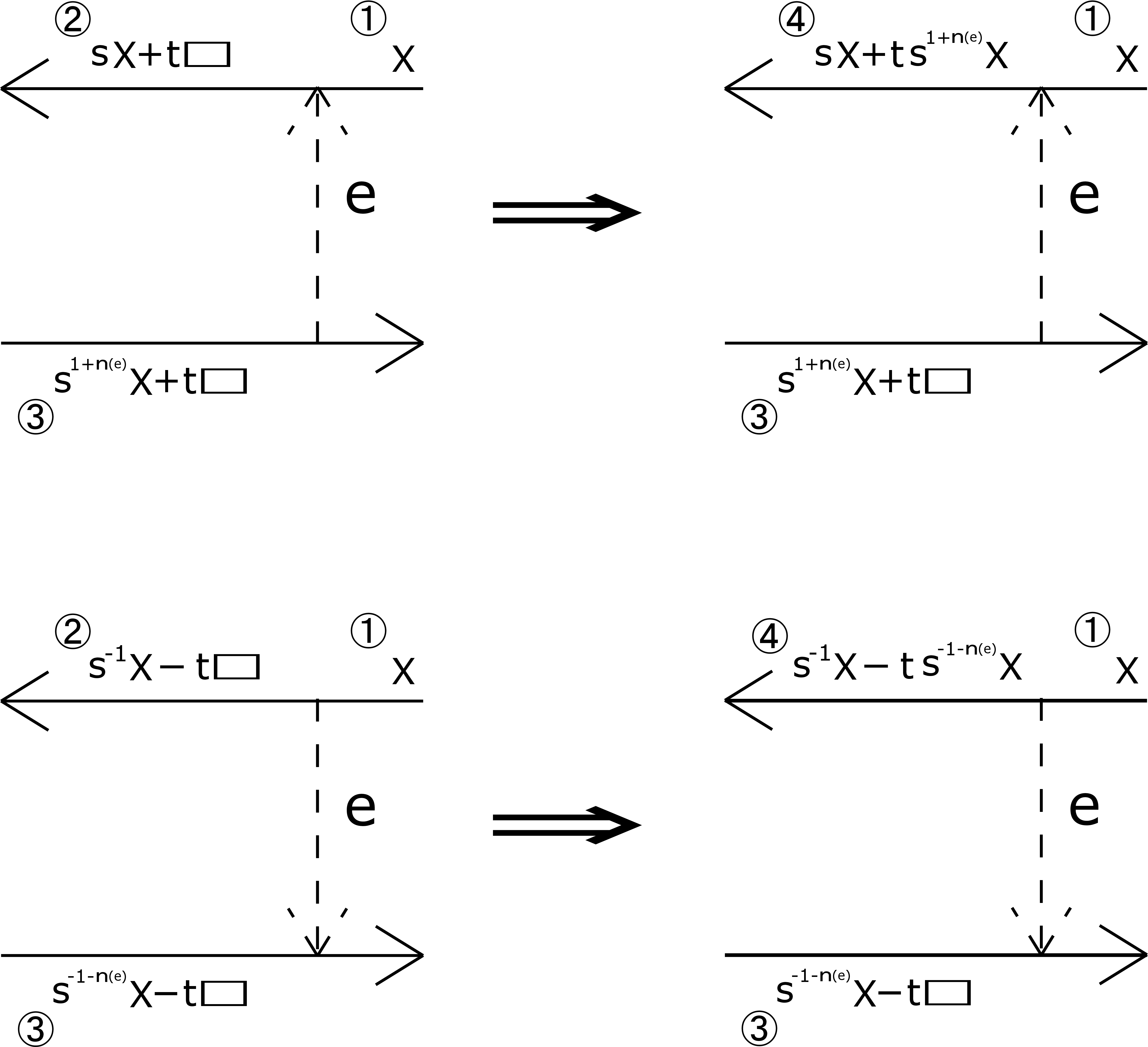}
 \end{center}
 \caption{Changes of colorings of arcs through passing crossings. }
 \label{firstpoly}
\end{figure}

By the above, we can compute the first $\mathcal{S}_{n}$-coloring polynomial using the u-polynomial. 
Conversely, we can reconstruct the u-polynomial using the first $\mathcal{S}_{n}$-coloring polynomial by discarding the terms of negative power. Note that the u-polynomials (and thus the first $\mathcal{S}_{n}$-coloring polynomial) have no constant terms. 
Therefore, the first $\mathcal{S}_{n}$-coloring polynomial has as much information as the u-polynomial has for classifying flat virtual knots. 
By direct computation, we have the following: 

\begin{prop}
The pair of the first and second $\mathcal{S}_{n}$-coloring polynomials has more information than the u-polynomial. 
\end{prop}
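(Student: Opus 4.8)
The plan is to prove the proposition by producing a \emph{separating example}. By the preceding proposition, the first coloring polynomial $f_{\alpha,1}(s)=u_{\alpha}(s)-u_{\alpha}(s^{-1})$ carries exactly the data of the u-polynomial (by the Observations above, $u_{\alpha}$ is recovered from $f_{\alpha,1}$ by discarding the terms of negative power). Consequently, any two flat virtual knots with equal u-polynomials automatically have equal first coloring polynomials, so to show that the pair $(f_{\alpha,1},f_{\alpha,2})$ is strictly stronger it suffices to exhibit two flat virtual knots $\alpha$ and $\beta$ with $u_{\alpha}=u_{\beta}$ but $f_{\alpha,2}\neq f_{\beta,2}$.

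The cleanest realization takes $\beta$ to be the trivial knot, for which $u_{\beta}=0$ and all coloring polynomials vanish. I would search the tables of \cite{flatknotinfo} for a nontrivial flat virtual knot $\alpha$ whose u-polynomial vanishes, $u_{\alpha}(s)=0$ (such knots exist; sufficiently symmetric Gauss diagrams furnish candidates). Then $f_{\alpha,1}=0=f_{\beta,1}$, and by the second Observation the second coloring polynomial $f_{\alpha,2}(s)\in\mathbb{Z}[s^{\pm1}]$ is genuinely well defined: since $F_{\alpha}$ has vanishing $t^{1}$-coefficient, its lowest-degree coefficient is unaffected by the unit ambiguity of Definition~\ref{def_sqncoloring} (writing $U=u_{0}+\cdots$, $V=v_{0}+\cdots$ with $u_{0}v_{0}=1$, the lowest term of $UF_{\alpha}V$ is $u_{0}v_{0}f_{\alpha,2}t^{2}=f_{\alpha,2}t^{2}$, because each $t$ commutes with $s^{\pm1}$ modulo higher degree). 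It then remains only to compute $f_{\alpha,2}$ and verify $f_{\alpha,2}\neq 0$, whereupon the pair $(f_{\alpha,1},f_{\alpha,2})$ separates $\alpha$ from the trivial knot while $u_{\alpha}=u_{\beta}=0$ does not.

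By the third Observation it is enough to work in $\mathcal{S}_{2}$, where $t^{3}=0$, $ts=s(t+t^{2})$ and $ts^{-1}=s^{-1}(t-t^{2})$ (so $s^{-1}ts=t+t^{2}$). I would compute $F_{\alpha}$ in one of two ways: either directly from the matrix $A$ of the construction by evaluating the $(k,k)$-quasideterminant, performing the non-commutative row reduction of Claim~\ref{kkquasideterminant} and invoking Lemmas~\ref{rowtransformation} and \ref{divide} to track the effect of the row operations; or, more efficiently, by transporting a single color around the Gauss circle exactly as in the proof of the preceding proposition, where passing a head and a tail of an arrow $e$ now multiplies the color by factors of the shape $s+s^{1+n(e)}t+(\cdots)t^{2}$ and $s^{-1}-s^{-1-n(e)}t+(\cdots)t^{2}$, and $f_{\alpha,2}$ is read off the $t^{2}$-coefficient of the ordered product $\prod_{e\in arr(D)}(\cdots)$ taken around $D$.

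The main obstacle is precisely this computation in the \emph{non-commutative} ring $\mathcal{S}_{2}$. In contrast with the $\mathcal{S}_{1}$ computation for $f_{\alpha,1}$, here $ts\neq st$, so the transport factors do not commute and the $t^{2}$-coefficient depends genuinely on the cyclic order of the arrows of $D$; keeping this ordering bookkeeping straight and confirming that the resulting Laurent polynomial is nonzero for the chosen $\alpha$ is the crux. Everything else—the reduction to a separating example, the well-definedness of $f_{\alpha,2}$ when $f_{\alpha,1}=0$, and the comparison with the trivial knot—is formal.
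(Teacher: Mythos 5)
Your proposal follows essentially the same route as the paper: the paper's proof also produces a separating example against the trivial knot, taking the flat virtual knot $4.11$, whose u-polynomial vanishes, and showing by computation that the second $\mathcal{S}_{n}$-coloring polynomials of the trivial knot and $4.11$ are $0$ and $s^{2}-2s-1+2s^{-1}-s^{-2}$ respectively. The only substance your plan leaves open is the witness itself --- you defer both the choice of $\alpha$ and the verification that $f_{\alpha,2}\neq 0$, which is precisely the content the paper supplies, so to finish you would need to carry out the $\mathcal{S}_{2}$ computation you describe (e.g.\ on $4.11$) and confirm the nonvanishing.
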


\begin{proof}
The u-polynomial of the flat virtual knot $4.11$ is $0$, and thus it cannot be distinguished from the trivial flat virtual knot by the u-polynomials. 
The first $\mathcal{S}_{n}$-coloring polynomials of them are $0$, and the second $\mathcal{S}_{n}$-coloring polynomials of trivial flat virtual knot and $4.11$ are $0$ and $s^{2}-2s-1+2s^{-1}-s^{-2}$, respectively by computation. 
\end{proof}

\begin{prop}
There exists a non-trivial flat virtual knot whose $\mathcal{S}_{n}$-coloring invariant is $0$ for all $n$. 
\end{prop}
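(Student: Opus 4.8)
The plan is to exhibit a single non-trivial flat virtual knot $\alpha$ and then argue separately that (i) its $\mathcal{S}_n$-coloring invariant $F_\alpha$ vanishes for every $n$, and (ii) $\alpha$ is non-trivial. For (ii) I would certify non-triviality by an invariant that the $\mathcal{S}_n$-coloring cannot see, for instance a finer invariant tabulated in \cite{flatknotinfo} or one of the based-matrix refinements of \cite{upolynomial}; this step is essential precisely because, once (i) holds, the invariant of this paper is useless for distinguishing $\alpha$ from the trivial flat virtual knot, so the non-triviality must come from elsewhere. A natural source of candidates is the list of flat virtual knots with vanishing u-polynomial: by the preceding proposition, $f_{\alpha,1}(s)=u_\alpha(s)-u_\alpha(s^{-1})$, and since $u_\alpha$ has only positive powers of $s$ and no constant term, the summands $u_\alpha(s)$ and $-u_\alpha(s^{-1})$ lie in disjoint degrees, so $f_{\alpha,1}=0$ is equivalent to $u_\alpha=0$. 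Hence $u_\alpha=0$ is a necessary first requirement, and one should search among such knots; the knot $4.11$ used above is the smallest, but its second coloring polynomial is non-zero, so a larger example is needed.

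For (i) the key structural observation I would use is that $F_\alpha$ is governed by the coloring holonomy. After cutting the single-component diagram at one point as in Remark~\ref{rmk_sqncoloring}, the colour of each arc is a left multiple $c_j\,x_0$ of the starting colour $x_0$, the elements $c_j$ obey the linear recurrences $c_{j+1}=s\,c_j+t\,c_{j'}$ at an over-passage and $c_{j+1}=s^{-1}(1-t^2)c_j-s^{-1}ts\,c_{j'}$ at an under-passage, and, up to the left/right unit normalisation of Definition~\ref{def_sqncoloring}, $F_\alpha$ measures the failure of the return element $c_k$ to equal $1$. Since each crossing is traversed once as an over-partner and once as an under-partner, the leading factors $s$ and $s^{-1}$ balance and $\pi_{0,n}(c_k)=1$, consistent with Claim~\ref{kkquasideterminant}; thus $F_\alpha=0$ for all $n$ is equivalent to $c_k=1$ in $\mathcal{S}_n$ for every $n$. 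I would then choose the diagram of $\alpha$ so that this return element telescopes to $1$ at every level simultaneously, by arranging the Gauss diagram to carry a symmetry that pairs each arrow-endpoint passage with an inverse passage, so that the ordered product of transfer elements collapses independently of $n$.

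The main obstacle, and where the argument genuinely needs care, is controlling all $n$ at once rather than a single level. Because of the relations $ts=s\sum_{i=1}^{n}t^{i}$ and $ts^{-1}=s^{-1}\sum_{i=1}^{n}(-1)^{i-1}t^{i}$, moving a $t$ past an $s$ in $\mathcal{S}_n$ creates terms of arbitrarily high $t$-degree up to $n$, so the return element depends essentially on $n$ and no finite computation in a fixed $\mathcal{S}_n$ can establish vanishing for all $n$. I therefore expect the decisive step to be a uniform cancellation argument valid in every $\mathcal{S}_n$: either a telescoping of the transfer product coming from a geometric symmetry of the chosen Gauss diagram, or an algebraic symmetry of the semiquandle (an (anti)automorphism of $\mathcal{S}_n$ intertwining $\triangleleft_o$ and $\triangleleft_u$) under which the fixed knot $\alpha$ forces each well-defined coloring polynomial to equal its own negative. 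In the latter route one proceeds by induction on $i$: by the Observations, once $f_{\alpha,j}=0$ for all $j<i$ the polynomial $f_{\alpha,i}\in\mathbb{Z}[s^{\pm1}]$ is unambiguous, so a relation $f_{\alpha,i}=-f_{\alpha,i}$ yields $f_{\alpha,i}=0$ by torsion-freeness of $\mathbb{Z}[s^{\pm1}]$, closing the induction and giving $F_\alpha=0$ for all $n$.
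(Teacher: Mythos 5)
Your proposal sets up the right framework---the holonomy/return-element reading of $F_\alpha$ from Remark~\ref{rmk_sqncoloring}, and the correct observation that any candidate must have $u_\alpha=0$---but it stops exactly where the proof has to start: you never produce a concrete knot, nor a mechanism that actually forces the return element to equal $1$ in every $\mathcal{S}_n$. Both of your proposed mechanisms, a Gauss-diagram symmetry making the transfer product ``telescope'' and an (anti)automorphism of $\mathcal{S}_n$ intertwining $\triangleleft_{o}$ and $\triangleleft_{u}$ that would yield $f_{\alpha,i}=-f_{\alpha,i}$, are left as hopes: neither is constructed, and it is not clear either can be realized by a non-trivial knot. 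You correctly identify the obstacle (a computation in any fixed $\mathcal{S}_n$ cannot certify vanishing for all $n$), but you do not overcome it, so the argument has a genuine gap at its core.

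The paper overcomes it with one idea your proposal is missing: connected sums of trivial diagrams. Since the $\mathcal{S}_n$-coloring invariant of the trivial flat virtual knot is $0$, and since the invariant is independent of the diagram, the fake vertices, and the cut point (Claims~\ref{independence_startingarc}, \ref{independence_fake} and \ref{independence_diagram}), the return map of \emph{any} diagram of the trivial knot, cut at \emph{any} point, is the identity---in every $\mathcal{S}_n$ simultaneously, with no computation at all. Hence if $D$ is the connected sum of two diagrams of the trivial flat virtual knot, the holonomy around $D$ is the composition of two identity maps, so $F=0$ for all $n$. The point special to the flat virtual setting is that such a connected sum need not be trivial: the paper takes $4.10$ (Figure~\ref{example_flatvirtual}), which is obtained as a connected sum of two trivial diagrams and is known to be non-trivial. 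The uniform-in-$n$ cancellation you were searching for thus comes for free from invariance applied to each trivial summand, rather than from any symmetry of the diagram or of the ring. (Both your plan and the paper rely on an external certificate of non-triviality, e.g.\ \cite{flatknotinfo}; that part of your proposal is fine.)
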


\begin{proof}
The $\mathcal{S}_{n}$-coloring invariant of the trivial flat virtual knot is $0$. 
Thus under the interpretation in Remark~\ref{rmk_sqncoloring}, we see the following: 
For every flat virtual knot diagram $D$ of the trivial flat virtual knot and every point $p$ on any arc of $D$, cut $D$ at $p$ and solve the obtained equation. 
Then the input coloring and output coloring are identical. 
This means that we cannot detect trivial flat virtual knot summands of composite flat virtual knots by $\mathcal{S}_{n}$-coloring invariant. 
Especially, the $\mathcal{S}_{n}$-coloring invariant of the flat virtual knot $4.10$ in Figure~\ref{example_flatvirtual} or \ref{example_gauss}, which is the resultant of the connected sum of two diagrams of trivial flat virtual knots is $0$. 
\end{proof}

\ \ E-mail address: \texttt{nnsekino@gmail.com}

\end{document}